\newcommand{\cE}{\mathcal{E}}
\newcommand{\bR}{\mathbb{R}}
\newcommand{\Proj}{\textup{Proj}}
\newcommand{\Jh}{J_h(x)}
\newcommand{\JJ}{J_h(x) J_h(x)^\top}
\newcommand{\rank}{\textup{rank}}
\newcommand{\diag}{\textup{diag}}
\newcommand{\cI}{\mathcal{I}}
\newcommand{\KKTgap}{\textup{\texttt{KKT-gap}}}
\newcommand{\olambda}{\overline{\lambda}}
\DeclareMathOperator*{\argmin}{arg\,min}
\newtheorem{theorem}{Theorem}
\newtheorem{lemma}{Lemma}
\newtheorem{rmk}{Remark}
\newtheorem{assump}{Assumption}
\newcommand{\lmin}{{\lambda_{\min}}}
\newcommand{\lmax}{\lambda_{\max}}
\newcommand{\JTJ}{{J_h(x)T(x) J_h(x)^\top}}
\newcommand{\JT}{J_h(x) T(x)}
\newcommand{\TJ}{T(x)J_h(x)^\top}
\title{Constrained Optimization From a Control Perspective
via Feedback Linearization}
\author{%
  Runyu Zhang  \\
 Massachusetts Institute of Technology\\
  \texttt{runyuzha@mit.edu} \\
  \And
  Arvind Raghunathan \\
  Mitsubishi Electric Research Laboratories \\
  \texttt{raghunathan@merl.com}\\
  \And
  Jeff Shamma \\
  University of Illinois Urbana-Champaign\\
  \texttt{jshamma@illinois.edu}
  \And
  Na Li\\
  Harvard University\\
  \texttt{nali@seas.harvard.edu}
}
\begin{document}
\maketitle
\begin{abstract}

Tools from control and dynamical systems have proven valuable for analyzing and developing optimization methods. In this paper, we establish rigorous theoretical foundations for using feedback linearization (FL)—a well-established nonlinear control technique—to solve constrained optimization problems. For equality-constrained optimization, we establish global convergence rates to first-order Karush-Kuhn-Tucker (KKT) points and uncover the close connection between the FL method and the Sequential Quadratic Programming (SQP) algorithm. Building on this relationship, we extend the FL approach to handle inequality-constrained problems. Furthermore, we introduce a momentum-accelerated feedback linearization algorithm and provide a rigorous convergence guarantee.
\end{abstract}

\section{Introduction}
Constrained optimization, also known as nonlinear programming, has found vast applications in several domains including robotics \cite{alonso2017multi}, supply chains \cite{garcia2015supply}, and safe operations of power systems \cite{dommel1968optimal}. 
First-order iterative algorithms are widely used to solve such problems, particularly in optimization and machine learning settings with large-scale datasets. These algorithms can be interpreted as discrete-time dynamical systems, while their continuous-time counterparts, derived by considering infinitesimal step sizes, take the form of differential equations. Analyzing these continuous-time systems can provide valuable theoretical insights, such as stability properties and convergence rates. This perspective is well-developed for unconstrained optimization, exemplified by the gradient flow  $\dot x = -\nabla f(x)$ \cite{elkabetz_continuous_2021,arora_convergence_2019,saxe2013exact,garg_fixed-time_2021,andrei_gradient_nodate}, the continuous-time counterpart of gradient descent, as well as its accelerated variants \cite{su2016differential,wilson_lyapunov_2018,muehlebach2019dynamical}. However, for constrained optimization, this approach remains less thoroughly explored. 

Recent studies (c.f. \cite{cerone_new_2024,gunjal_unified_2024,Ahmed24Control, muehlebach_constraints_2022,chen2023online,chen_model-free_2022,francca2018dynamical}) have explored the dynamical properties of continuous time constrained optimization algorithms. These works leverage a feedback control perspective to design and analyze the performance of optimization methods. Specifically, they propose frameworks that model constrained optimization problems as control problems, where the iterations of the optimization algorithm are represented by a dynamical system, and the Lagrange multipliers act as control inputs. The objective in this framework is to drive the system to a feasible steady state that satisfies the constraints. Within this framework, various control strategies can be employed to design the update of Lagrange multipliers, resulting in different control-based first-order methods.  For example, it can be shown that Proportional-Integral (PI) control leads to Primal-Dual Gradient Dynamics (PDGD), whose properties are well-studied \cite{kose1956solutions,qu2018exponential,ding2019global}. However, most of the works focuses on convergence for \emph{convex} constrained problems.  


In this work, we adopt the same control perspective as above, and specifically focus on using another approach, namely Feedback Linearization (FL) a standard approach in nonlinear control (cf. \cite{isidori1985nonlinear,henson1997feedback}), to design the Lagrange multiplier. One key advantage of this method is its natural suitability for handling \textit{nonconvex} constrained optimization problems. Although this approach \cite{cerone_new_2024}, {along with similar dynamical system perspectives \cite{ schropp_dynamical_2000,Ahmed24Control,Muehlebach22Constraints,muehlebach2023accelerated}}, has been explored in the literature, its theoretical properties are not yet fully understood. Several important questions remain open.

The first question concerns global convergence and convergence rates. While existing works established local stability \cite{cerone_new_2024}, global convergence and convergence rate have not been established. The second question concerns the relationship between the feedback linearization approach and existing optimization algorithms, specifically whether the discretization of the optimization dynamics derived from feedback linearization aligns with any known optimization method. Additionally, since most existing studies \cite{cerone_new_2024, schropp_dynamical_2000} focus exclusively on equality constraints, this raises the third question: how can the feedback linearization approach be extended effectively for inequality constraints? Lastly, it remains unclear whether ideas from acceleration in optimization--such as momentum-based techniques--can be incorporated to speed up feedback lienarization methods for constrained optimization. 

\vspace{-5pt}

\paragraph{Our contributions.}  Motivated by the open questions discussed above, we aim to deepen the theoretical understanding of the feedback linearization (FL) approach for constrained optimization by addressing these questions. Specifically, our contributions are as follows:

\begin{enumerate}[topsep=0pt, itemsep = 0pt, left = 0pt]
    \item \label{item:contribution 1} We establish a global convergence rate to a first-order Karush-Kuhn-Tucker (KKT) point for the FL method for equality-constrained optimization (Section \ref{sec:convergence}).
    \item \label{item:contribution 2} We demonstrate that the FL-based optimization algorithm  is closely related to the Sequential Quadratic Programming (SQP) algorithm, providing a new perspective on its connection to established optimization techniques (Section \ref{sec:relationship}).
    \item \label{item:contribution 3} Building on this insight, we extend the method to handle inequality constraints, broadening its applicability (Section \ref{sec:ineq-extension}).
    \item \label{item:contribution 4} Finally,  we propose a momentum-accelerated FL algorithm for constrained optimization, which empirically exhibits accelerated convergence in both equality constrained and inequality constrained settings. Furthermore, we establish $O(\frac{1}{\sqrt{T}})$ convergence in the nonconvex equality constrained setting (Section \ref{sec:acceleration}).
\end{enumerate}

Due to space limits, a comprehensive review of related literature is deferred to Appendix \ref{apdx:literature}.

\vspace{-5pt}
\paragraph{Notations:} We use the notation $[n], n\in \mathbb{N}$ to denote the set $\{1,2,3,\dots,n\}$. We use $\nabla f(x)$ to denote the gradient of a scalar function $f: \bR^n \to \bR$ evaluated at the point $x\in \bR^n$ and use $\nabla^2 f(x)$ to denote its corresponding Hessian matrix. We use $J_h(x)$ to denote the Jacobian matrix of a function $h: \bR^n\to\bR^m$ evaluated at $x\in \bR^n$, i.e. $[J_h(x)]_{i,j} = \frac{\partial h_i(x)}{\partial x_j}$, $i\in [m], j\in[n]$. Unless specified otherwise, we use $\|\cdot\|$ to denote the $L_2$ norm of matrices and vectors and use $\|\cdot\|_\infty$ to denote the $L_\infty$ norm. For a positive definite matrix $A$, we use $\|X\|_A := \|A^{-\frac{1}{2}} X\|$ to denote the $A$-norm of $X$. For a set $\mathcal{A}$, we use $\mathcal{A}^c$ to denote its complement. When no ambiguity arises, we denote \( \frac{dx}{dt} \) by \( \dot{x} \), and abbreviate time-dependent variables such as \( x(t) \), \( \lambda(t) \), and \( y(t) \) as \( x \), \( \lambda \), and \( y \).

\section{Feedback Linearization (FL) for solving equality constrained optimization}\label{sec:prelim}
In this section, we briefly review related works that adopt a control perspective, particularly focusing on the use of feedback linearization (FL) to address equality-constrained optimization problems. 

\paragraph{Control perspective on equality-constrained optimization \cite{cerone_new_2024}}

Consider the constrained optimization problem with equality constraints
\begin{equation}\label{eq:optimization-eq-contraint}
    \textstyle \min_x f(x) \qquad s.t. ~~h(x)  = 0,
\end{equation}
where $x\in \bR^n$, $f:\bR^n\to \bR, h:\bR^n \to \bR^m$. { Here we assume that $f,h$ are differentiable, and additional assumptions will be introduced where needed to support the analysis.} The first-order KKT conditions are given by
\begin{equation}\label{eq:KKT-condition}
\begin{split}
   \textstyle  -\nabla f(x) - J_h(x)^\top \lambda = 0,\quad h(x) = 0
\end{split}
\end{equation}
The key idea is to view finding the KKT point as a control problem (Figure \ref{fig:control-perspective-constrained-opt}) with the system dynamics given by,
\begin{equation}\label{eq:opt-dynamics}
\begin{split}
    &\textstyle \frac{dx}{dt} = -T(x(t))\left(\nabla f(x(t)) + J_h(x(t))^\top \lambda(t)\right),\\
    &\textstyle y(t) = h(x(t)),
\end{split}
\end{equation}

where $x$ represents the system state, $y=h(x)$ is system constraint variable  and $\lambda$ is the control input. $T(x)$ here is a positive definite matrix and throughout the paper we assume that 
there exists $\lmin,\lmax$ such that for all $x$,
\begin{align*}
  \textstyle   \lmin I\preceq T(x)\preceq\lmax I
\end{align*}
\begin{wrapfigure}{r}{0.35\textwidth} 
    \centering
\includegraphics[width=0.9\linewidth]{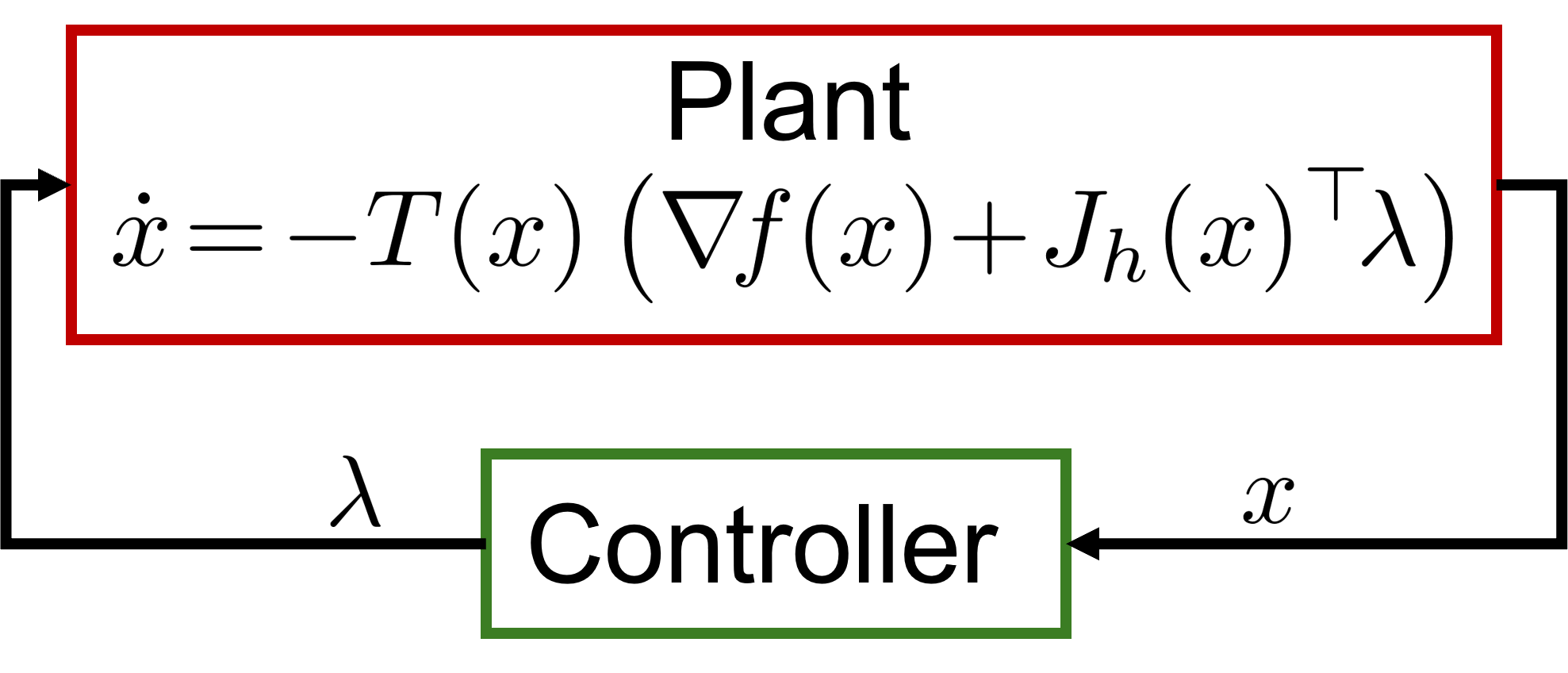}
\vspace{-5pt}
    \caption{\centering Control Perspective for Constrained Optimization}
\label{fig:control-perspective-constrained-opt}
\vspace{-5pt}
\end{wrapfigure}
Note that at an equilibrium point $x^\star$ of the system in Fig. \ref{fig:control-perspective-constrained-opt} must satisfy:  $\dot x = 0 ~\Longrightarrow~ \nabla f(x^\star) +J_h^\top(x^\star) \lambda = 0.$

Further, if $x^\star$ is feasible, i.e. $h(x^\star) = 0$, then we get that $x^\star$ satisfies the first order KKT conditions \eqref{eq:KKT-condition}. Thus, the key idea is to manipulate the evolution of $x$ so that we stabilize the system to equilibrium and feasibility. 

To measure convergence, We define the KKT-gap of $(x, \lambda)$ as follows, such that converging to a KKT point is equivalent to KKT-gap is zero \footnote{In this paper, we use the term `convergence’ to refer to the decay of the KKT gap to zero. While this is weaker than convergence to a local or global optimum, it remains a meaningful guarantee in nonconvex constrained optimization. Extending the framework to incorporate saddle-point escape techniques and ensure convergence to local optima is an important direction for future work.}:
\begin{align}\label{eq:KKT-gap-eq}
\textstyle    \KKTgap(x,\lambda):= \max\{\|\nabla f(x) + J_h(x)^\top \lambda\|, \|h(x)\|_\infty\}.
\end{align}
To design the controller $\lambda(t)$ to reach a feasible equilibrium, we next introduce the feedback linearization (FL) approach, which is the main focus of this paper.

\paragraph{Feedback linearization (FL) for equality-constrained optimization \cite{cerone_new_2024}}
Feedback linearization (FL) \cite{isidori1985nonlinear,henson1997feedback} is a classical control method for controlling nonlinear dynamics which generally takes the following form:
\vspace{-5pt}
\begin{align}
    \dot x = F(x) + G(x) \lambda
    \label{eq:non-linear}
\end{align}
Directly designing a stabilizing controller for the nonlinear system is a challenging task. The FL approach circumvents the difficulty by
transforming the nonlinear control problem into an equivalent linear control problem, which is much easier to analyze, through a change of variables and a suitable control input. 
In particular, if $G(x)$ is invertible, then one can let $\lambda:=G(x)^{-1}(u-F(x))$ where $u(t)$ is the new control input to be designed. Substituting $\lambda = G(x)^{-1}(u-F(x))$ into the dynamics \eqref{eq:non-linear}, the dynamics becomes $\dot x = u$, which is now a linear system.


Similarly, in the equality constrained optimization problem, we write out the dynamics for $y$:
\begin{align*}
    \textstyle \dot y = J_h(x) \dot x = \underbrace{- \JT}_{F(x)} \nabla f(x) \underbrace{- \JTJ}_{G(x)} \lambda
\end{align*}
Thus, by setting
\vspace{-5pt}
\begin{align*}
   \textstyle \lambda =  -\left(\JT J_h(x)^{\top}\right)^{-1}(u+ \JT \nabla f(x))
\end{align*}
we have that $\dot y = u$, then we can simply set $u = -Ky$ where $K$ is a Hurwitz matrix to guarantee that $y$ asymptotically converge to zero. Thus the feedback linearization (FL) dynamics is given as:
\begin{tcolorbox}[title={\small FL for Equality-Constrained Optimization \cite{cerone_new_2024}},width = \linewidth]
\begin{equation}
\begin{split}\label{eq:feedback-linearization}
    \dot x &= - T(x)\left(\nabla f(x) + J_h(x)^\top \lambda\right)\\
    \lambda &= -\left(\JT J_h(x)^{\top}\right)^{-1}(\JT \nabla f(x) - Kh(x))
\end{split}
\end{equation}
\end{tcolorbox}

The FL approach is particularly effective for handling nonlinear dynamics, making it well-suited for nonconvex constrained optimization. Numerical results in \cite{cerone_new_2024, schropp_dynamical_2000} highlight its strong performance in such settings. However, its theoretical properties remain less well understood. Existing analyses primarily focus on local stability \cite{cerone_new_2024}, while global convergence and convergence rates are largely unexplored. Additionally, the connection between the FL algorithm and existing optimization methods is not well established. It also remains unclear how to leverage the FL approach to develop novel techniques for inequality-constrained optimization and faster constrained optimization. In the following sections, we will systematically address these open problems. 

{
\begin{rmk}[Scalability and Computational Complexity]\label{rmk:computational-complexity}
 Note that although \eqref{eq:feedback-linearization} requires calculating the matrix inversion of $\JTJ \in \bR^{m\times m}$, the dimension scales with the number of constraints $m$ instead of the dimension of the optimization variable $ x\in \bR^n$. In many practical settings, e.g. safe RL, the number of constraints is significantly smaller than the dimension of $x$. In such cases, the inversion is computationally inexpensive and can be performed efficiently. 
 Moreover, even in cases where the number of constraints is large and the matrix inversion becomes computationally burdensome, we show that it is possible to approximate the inverse efficiently while still maintaining convergence guarantees. Specifically, in Appendix \ref{sec:apdx-numerics-PI}, we present a modified FL algorithm that incorporates a Proportional-Integral (PI) controller to tolerate approximation error and still ensure convergence to an optimal solution.
\end{rmk}
}

{
\begin{rmk}[Extensions of the FL approach]\label{rmk:FL-extension}
Although similar dynamics to \eqref{eq:feedback-linearization} has also been proposed from other perspectives such as control barrier functions \cite{Ahmed24Control}, nonsmooth dynamics design \cite{muehlebach_constraints_2022,muehlebach_accelerated_2024}, we believe that the feedback linearization perspective provides a more general framework. In principle, beyond specifying a linear target of the form $\dot{y} = -Ky$, this approach allows us to leverage arbitrary stable controllers, such as PI controllers, e.g.
\begin{align}\label{eq:FL-PI}
    \textstyle \dot y = -K_py - K_i \int_0^t y(s)ds
\end{align}
for constraint enforcement, potentially offering new algorithmic behaviors or robustness benefits. In Appendix \ref{sec:apdx-numerics-PI}, we present an example where the algorithm in \eqref{eq:FL-PI} succeeds in converging to the optimal solution, even when only an inaccurate approximation of $\left(\JTJ\right)^{-1}$ is available—while \eqref{eq:feedback-linearization} fails to converge to an optimal solution. This highlights the robustness and broader applicability of the feedback linearization framework.

 \end{rmk}
}


\section{FL control method: Convergence and Relationship to SQP}\label{sec:eq-FL-convergence}
\subsection{Convergence Results}\label{sec:convergence}Section \ref{sec:prelim} introduces the FL method for equality-constrained optimization. The analyses in existing works mainly focus on the local stability, and little is known about the global convergence property. In this section, we establish a global convergence rate to a first order KKT point (Contribution \ref{item:contribution 1}). 

The result relies on the following assumptions:
\begin{assump}\label{assump:lipschitz} There exists a constant $M$ such that $\|\nabla f(x)\| < M$, $\|J_h(x)\| < M$ for all $x$;
\end{assump}
\begin{assump}\label{assump:f-lower-bounded}
    The function $f(x)$ is lower-bounded, i.e. $f(x) \ge f_{\min}$ for all $x$.
\end{assump}
\begin{assump} \label{assump:eq-JJ} There exists a constant D such that $(\JJ)^{-1} \prec D^2 I$ for all $x$. 
\end{assump}
Note that Assumption \ref{assump:eq-JJ} is similar to the assumption made in \cite{cerone_new_2024} that assumes that $\rank(J_h(x)) = m$ for all $x$, which is equivalent to $\JJ$ being invertible, thereby ensuring the regularity of the transformation in \eqref{eq:feedback-linearization} and the existence of a well-defined feedback linearization. This assumption is also known as the linear independence constraint qualification
(LICQ, cf. \cite{peterson1973review,nocedal_numerical_2006}, see more discussion in Appendix \ref{apdx:literature}) in optimization literature. Assumption \ref{assump:lipschitz} implies that the functions $f$ and $g$ are Lipschitz. We would like to acknowledge that this assumption is relatively restrictive and is solely for analysis purpose \footnote{
We note that if $x^\star \in D$ is known a priori for a compact domain $D$, a potential approach for handling non-uniformly Lipschitz functions $f,g$ is to construct Lipschitz extensions $f',g'$ such that their gradients and Jacobians match those of $f,g$ within $D$ while remaining uniformly Lipschitz outside $D$ (cf. \cite{stein1970singular}).}. In our numerical simulations we found that the algorithm is suitable for non-uniformly-Lipschitz functions. We now state our result in terms of the convergence rate:
\begin{theorem}\label{theorem: FL-convergence-eq}
    Let Assumption \ref{assump:lipschitz}, \ref{assump:f-lower-bounded} and \ref{assump:eq-JJ} hold and let the control gain $K$ be a diagonal positive definite matrix, i.e., $K = \diag\{k_i\}_{i=1}^m$, where $k_i > 0$. Then we have that the dynamic of the feedback linearization method \eqref{eq:feedback-linearization} satisfies:
    \begin{enumerate}[left=0pt,itemsep=0pt]
        \item For the set $\cE_i:= \{x: h_i(x) \ge 0\}$, if $x(0)\in\cE_i$, then $x(t)\in \cE_i $ for all $t\ge 0$. Similarly, if $x(0) \in \cE_i^c$, then $x(t)\in \cE_i^c $ for all $t\ge 0$, further
        $ h_i(x(t)) = e^{-k_i t} h_i(x(0)) $,
        i.e., $h(x(t)) \to 0$ with an exponential rate as $t\to +\infty$. 
        \item 
        Define $\ell(x):= f(x) + \frac{\lmax}{\lmin}(MD)^2\sum_{i=1}^m  |h_i(x)|$, then $\ell(x(t))$ is non-increasing w.r.t. $t$.
        \item  Let $\olambda(t):= - \left(\JTJ\right)^{-1} \JT\nabla f(x(t))$, then we have that
        \begin{align*}
           \textstyle  \int_{t=0}^T \|\nabla f(x(t)) +J_h(x(t))^{\top} \olambda(t) \|^2dt \le\frac{1}{\lmin}\left(\ell(x(0)) - \ell(x(T))\right),
        \end{align*}
        and that $\lim_{t\to +\infty}\left(\lambda(t) -\olambda(t)\right) =0$.
        \item (Asymptotic convergence and convergence rate) The above statements imply that,
       \begin{small}
        \begin{align*}
            \textstyle \inf_{0\le t\le T}\KKTgap(x(t),\olambda(t)) \le
            &\textstyle\max\left\{\sqrt{\frac{2}{T} \left(\frac{f(x(0)) - f_{\min}}{\lmin} +\frac{\lmax M^2D^2}{\lmin^2}\sum_i |h_i(x(0))| \right)},\right.\\ &\textstyle \qquad\qquad\left.\max_{1\le i\le m} \left\{h_i(x(0))e^{-\frac{k_iT}{2}}\right\} \right\}\sim O\left(\frac{1}{\sqrt{T}}\right)
        \end{align*}        
        \end{small}
        further, we have that
        $
            \textstyle \lim_{t\to+\infty}\KKTgap(x(t),\olambda(t)) = 0,~ \lim_{t\to+\infty}\KKTgap(x(t),\lambda(t)) = 0.$
    \end{enumerate}
\end{theorem}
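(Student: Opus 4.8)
The plan is to establish the four claims sequentially, since each builds on the previous ones. For claim (1), I would substitute the FL control law $\lambda$ from \eqref{eq:feedback-linearization} into $\dot x$ and then compute $\dot y = J_h(x)\dot x$ directly. The key algebraic cancellation is that $-J_h(x)T(x)\nabla f(x) + J_h(x)T(x)J_h(x)^\top (\JTJ)^{-1}(\JT\nabla f(x) - Kh(x))$ collapses to $-Kh(x)$, so $\dot y = -Kh(x) = -Ky$. Since $K = \diag\{k_i\}$ is diagonal, this decouples componentwise into $\dot h_i(x(t)) = -k_i h_i(x(t))$, whose unique solution is $h_i(x(t)) = e^{-k_i t}h_i(x(0))$. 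The sign of $h_i$ is therefore preserved for all time, which gives forward-invariance of both $\cE_i$ and $\cE_i^c$, and the exponential decay is immediate.

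For claim (2), I would differentiate $\ell(x(t)) = f(x(t)) + \frac{\lmax}{\lmin}(MD)^2\sum_i |h_i(x(t))|$ along the trajectory. The first term gives $\nabla f(x)^\top \dot x = -\nabla f(x)^\top T(x)(\nabla f(x) + J_h(x)^\top\lambda)$; using $\lambda = \olambda + (\JTJ)^{-1}Kh(x)$ and the fact that $\nabla f + J_h^\top\olambda$ is precisely the residual that is orthogonal-ish to the constraint gradients, I expect this to split into a clean negative-definite piece $-\|\nabla f + J_h^\top\olambda\|^2$ term (scaled by something between $\lmin$ and $\lmax$) plus a cross term involving $h(x)$. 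For the $\sum_i |h_i|$ term, using $\frac{d}{dt}|h_i(x(t))| = \sgn(h_i)\dot h_i = -k_i |h_i|$ (valid away from $h_i = 0$, and the degenerate case is handled since $h_i$ keeps its sign), I get a strictly negative contribution $-\frac{\lmax}{\lmin}(MD)^2\sum_i k_i|h_i|$. The main obstacle here is bounding the cross term: I would bound $\|(\JTJ)^{-1}\|$ using Assumption~\ref{assump:eq-JJ} together with the $\lmin,\lmax$ sandwich on $T(x)$ (note $\JTJ \succeq \lmin \JJ$, so $(\JTJ)^{-1} \preceq \frac{1}{\lmin}(\JJ)^{-1} \prec \frac{D^2}{\lmin}I$), bound $\|\nabla f\|,\|J_h\|$ by $M$, and choose the coefficient $\frac{\lmax}{\lmin}(MD)^2$ of the penalty precisely so that the cross term is absorbed by the $-\frac{\lmax}{\lmin}(MD)^2\sum_i k_i |h_i|$ term, leaving $\dot\ell \le -\lmin\|\nabla f + J_h^\top\olambda\|^2 \le 0$.

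For claim (3), the integral bound is just integrating the inequality $\dot\ell(x(t)) \le -\lmin\|\nabla f(x(t)) + J_h(x(t))^\top\olambda(t)\|^2$ from claim (2) over $[0,T]$ and rearranging. For $\lim_{t\to\infty}(\lambda(t) - \olambda(t)) = 0$: since $\lambda - \olambda = (\JTJ)^{-1}Kh(x)$, and $\|(\JTJ)^{-1}\| < D^2/\lmin$ is uniformly bounded while $\|K h(x(t))\| \le \max_i k_i \cdot \|h(x(t))\| \to 0$ exponentially by claim (1), the difference vanishes. For claim (4), I would note $\KKTgap(x(t),\olambda(t)) = \max\{\|\nabla f + J_h^\top\olambda\|, \|h\|_\infty\}$. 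The second component is controlled by claim (1): $\|h(x(t))\|_\infty = \max_i e^{-k_i t}|h_i(x(0))|$. For the first component, since the integral of $\|\nabla f + J_h^\top\olambda\|^2$ over $[0,T]$ is bounded by claim (3), the infimum over $[0,T]$ of the integrand is at most $\frac{1}{T}$ times the integral — this gives the $\sqrt{1/T}$ rate after using $f(x) \ge f_{\min}$ and $\ell(x(T)) \ge f_{\min}$ (dropping the nonnegative penalty term) to bound $\ell(x(0)) - \ell(x(T)) \le f(x(0)) - f_{\min} + \frac{\lmax}{\lmin}(MD)^2\sum_i|h_i(x(0))|$. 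Taking $\max$ of the two rates and then $T\to\infty$ gives $\inf_t \KKTgap \to 0$; to upgrade the $\inf$ to a genuine $\lim$ I would argue that $\ell$ is bounded below and nonincreasing hence convergent, so $\dot\ell \to 0$ along a subsequence forces $\|\nabla f + J_h^\top\olambda\|\to 0$, and combined with the exponential decay of $h$ and the uniform continuity afforded by Assumption~\ref{assump:lipschitz} (the trajectory has bounded velocity, so $\KKTgap(x(t),\olambda(t))$ cannot oscillate), the full limit follows; finally $\KKTgap(x(t),\lambda(t)) \to 0$ follows from the triangle inequality and $\lambda - \olambda \to 0$ from claim (3). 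I expect the main obstacle to be the bookkeeping in claim (2) — getting the penalty coefficient exactly right so the cross term cancels — and the upgrade from $\inf$ to $\lim$ in claim (4), which requires a short equicontinuity/Barbalat-type argument rather than the raw integral bound.
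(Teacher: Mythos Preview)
Your proposal is correct and follows essentially the same approach as the paper: the paper also computes $\dot y=-Ky$ for claim (1); for claim (2) it introduces $P(x)=T(x)-T(x)J_h(x)^\top(\JTJ)^{-1}J_h(x)T(x)$ and shows $\dot f=-\nabla f^\top P\nabla f-\nabla f^\top TJ_h^\top(\JTJ)^{-1}Kh$, which is exactly your splitting $\lambda=\olambda+(\JTJ)^{-1}Kh$ rewritten, and then uses the identity $\nabla f^\top P\nabla f=\|\nabla f+J_h^\top\olambda\|_{T}^2$ to get $\dot\ell\le-\lmin\|\nabla f+J_h^\top\olambda\|^2$; claim (3) is then an integration.

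There is one small gap in your treatment of claim (4). You bound $\inf_{[0,T]}\|\nabla f+J_h^\top\olambda\|$ and $\|h(x(t))\|_\infty$ separately and then ``take the max of the two rates.'' But $\KKTgap$ is itself a pointwise max, and $\inf_t\max\{A(t),B(t)\}$ is \emph{not} bounded above by $\max\{\inf_t A(t),\inf_t B(t)\}$: the time realizing the infimum of the gradient residual over $[0,T]$ could be near $t=0$, where $h$ has not decayed at all. The paper's fix---and the origin of the constants $2/T$ and $e^{-k_iT/2}$ in the statement---is to restrict to $t\in[T/2,T]$. On that half-interval the constraint violation is uniformly at most $\max_i|h_i(x(0))|e^{-k_iT/2}$, while the infimum of $\|\nabla f+J_h^\top\olambda\|^2$ over an interval of length $T/2$ is at most $\tfrac{2}{T}$ times the full integral from claim (3). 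This produces a single $t^\star\in[T/2,T]$ at which both components of the $\KKTgap$ are simultaneously small, and the stated bound follows.
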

Statement 4 in Theorem \ref{theorem: FL-convergence-eq} implies that the algorithm can find an $\epsilon$-first-order-KKT-point within time $\frac{1}{\epsilon^2}$. We note that ensuring last-iterate convergence in nonconvex optimization is generally challenging. Hence, our analysis focuses on the best iterate, a widely adopted criterion in nonconvex optimization. However, in the setting where the optimization problem \eqref{eq:optimization-eq-contraint} is strongly convex, we are able to strengthen our convergence result to the last iterate convergence to the global optimal solution. Due to space limitations, we defer the detailed proof of Theorem \ref{theorem: FL-convergence-eq} as well as the result for the strongly convex setting to Appendix \ref{apdx:theorem-convergence-eq}. The key step of the proof involves constructing the merit function $\ell(x)$ in Statement 2. We also note that $\ell(x)$ also serves as the exact penalty function in constrained optimization literature (cf. \cite{eremin1967penalty,zangwill1967non}).

\subsection{Relationship with SQP}\label{sec:relationship}
The FL dynamics \eqref{eq:feedback-linearization} provides a concise and elegant formulation, prompting the question of whether certain optimization algorithms can be derived through its discretization. In this section, we establish a fundamental connection between the continuous-time FL dynamics and the Sequential Quadratic Programming (SQP) algorithm (Contribution \ref{item:contribution 2}). Specifically, we demonstrate that the forward-Euler discretization (cf. \cite{atkinson1991introduction, ascher1998computer}) of \eqref{eq:feedback-linearization} is equivalent to the SQP algorithm.

The state space continuous time dynamic for~\eqref{eq:feedback-linearization} is 
\begin{align*}
    &\dot x = -T(x)\Big(\nabla f(x) - J_h(x)^\top\left(\JTJ\right)^{-1}(\JT \nabla f(x)  - K h(x))\Big).
\end{align*}
Its forward-Euler discretization scheme is
    \begin{align}\label{eq:feedback-linearization-forward-Euler}
    &x_{t+1} \!=\! x_t \!-\! \eta T(x_t)\left( \!\nabla f(x_t)\! -\! J_h(x_t)^{\!\top}\!\left(J_h(x_t)T(x_t) J_h(x_t)^{\!\top} \right)^{-1}\!(J_h(x_t)T(x_t) \nabla f(x_t) \!-\! K h(x_t)) \right)
\end{align}

We now consider the following SQP method, which is widely discussed in literature (cf. \cite{nocedal_numerical_2006,bonnans2006numerical,oztoprak_constrained_2021}):
\begin{align}\label{eq:local-linearization-opt}
    x_{t+1} & =\textstyle \argmin_x \nabla f(x_t)^{\top} (x - x_t) + \frac{1}{2\eta} (x-x_t)^{\top} T(x_t)^{-1}(x-x_t)\notag\\
    &\qquad \textstyle s.t.\quad  h(x_t) + J_h(x_t) (x-x_t) = 0
\end{align}

We are now ready to state the main result of this section, which demonstrates the equivalence of \eqref{eq:feedback-linearization-forward-Euler} and \eqref{eq:local-linearization-opt}
\begin{theorem}\label{theorem:FL-SQP-eq}
    Under Assumption \ref{assump:eq-JJ}, when $K = \frac{1}{\eta} I$, the discretization of FL \eqref{eq:feedback-linearization-forward-Euler} is equivalent to the SQP algorithm \eqref{eq:local-linearization-opt}.
\end{theorem}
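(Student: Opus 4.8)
The plan is to verify that the SQP subproblem \eqref{eq:local-linearization-opt} is an equality-constrained quadratic program with a positive-definite Hessian $\frac{1}{\eta}T(x_t)^{-1}$, so it has a unique minimizer characterized exactly by its own KKT system; I would then solve that linear KKT system in closed form and show the resulting $x_{t+1}$ coincides with the forward-Euler update \eqref{eq:feedback-linearization-forward-Euler} when $K = \frac{1}{\eta}I$. Concretely, writing $d := x_{t+1} - x_t$ and introducing a multiplier $\nu$ for the linearized constraint, the stationarity and primal-feasibility conditions of \eqref{eq:local-linearization-opt} are
\begin{align*}
    \nabla f(x_t) + \tfrac{1}{\eta}T(x_t)^{-1} d + J_h(x_t)^\top \nu &= 0,\\
    h(x_t) + J_h(x_t)\, d &= 0.
\end{align*}

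From the first equation I would solve $d = -\eta\, T(x_t)\bigl(\nabla f(x_t) + J_h(x_t)^\top \nu\bigr)$, substitute into the second, and use Assumption \ref{assump:eq-JJ} (which guarantees $J_h(x_t)T(x_t)J_h(x_t)^\top$ is invertible, since $T(x_t)\succ 0$ and $J_h$ has full row rank) to solve for the multiplier:
\begin{align*}
    \nu = \bigl(J_h(x_t)T(x_t)J_h(x_t)^\top\bigr)^{-1}\Bigl(\tfrac{1}{\eta} h(x_t) - J_h(x_t)T(x_t)\nabla f(x_t)\Bigr).
\end{align*}
Plugging this $\nu$ back into the expression for $d$ gives exactly the right-hand side of \eqref{eq:feedback-linearization-forward-Euler} with $K h(x_t)$ replaced by $\frac{1}{\eta}h(x_t)$, i.e. with $K = \frac{1}{\eta}I$; matching terms completes the proof. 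I would also note that positive-definiteness of the QP Hessian plus the constraint qualification guarantees the SQP subproblem has a genuinely unique solution, so the equivalence is not merely that the Euler iterate is \emph{a} stationary point but that it is \emph{the} SQP iterate.

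The argument is essentially a routine closed-form solution of a quadratic program, so there is no deep obstacle; the main point requiring care is the bookkeeping of the matrix algebra — making sure the invertibility hypothesis is invoked correctly (Assumption \ref{assump:eq-JJ} combined with $\lmin I \preceq T(x) \preceq \lmax I$ yields invertibility of $J_h(x_t)T(x_t)J_h(x_t)^\top$, not just of $J_h(x_t)J_h(x_t)^\top$) and tracking the factor $\eta$ so that the substitution $K = \frac{1}{\eta}I$ lines up precisely. A secondary subtlety worth a sentence is observing that the forward-Euler update is well-defined under the same assumption, so both sides of the claimed equivalence exist; beyond that the computation is direct.
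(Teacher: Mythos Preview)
Your proposal is correct and follows essentially the same route as the paper: the paper writes the Lagrangian of \eqref{eq:local-linearization-opt}, invokes the relaxed Slater condition so that the KKT conditions characterize the optimum, solves stationarity for $x-x_t$, substitutes into the linearized constraint to obtain $\lambda$ in closed form, and recovers \eqref{eq:feedback-linearization-forward-Euler} with $K=\frac{1}{\eta}I$. Your additional remarks on uniqueness and on why $J_h(x_t)T(x_t)J_h(x_t)^\top$ is invertible are accurate and slightly more explicit than the paper's presentation, but the substance is the same.
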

The proof of Theorem \ref{theorem:FL-SQP-eq} leverages the fact that \eqref{eq:local-linearization-opt} satisfies the relaxed Slater condition.
The detailed proof is deferred to Appendix \ref{apdx:relationship}.

\begin{rmk}[Choice of $T(x)$]\label{rmk:choice-of-T}
  Theorem \ref{theorem:FL-SQP-eq} provides insights into the selection of $T(x)$ for the FL approach.  Different choices of $T(x)$ lead to different types of SQP algorithms. Here we discuss two specific types of $T(x)$. First, $T(x)$ is set as the inverse of the Hessian matrix, i.e., $T(x) = \left(\nabla^2 f(x)\right)^{-1}$, then \eqref{eq:local-linearization-opt} corresponds to the Newton-type algorithm where the quadratic term in the objective function is given by $(x-x_t)^\top \nabla^2 f(x)(x-x_t)$, which is widely considered in literature (cf. \cite{nocedal_numerical_2006,bonnans2006numerical}). For this specific type of $T(x)$, we name its corresponding FL dynamics \eqref{eq:feedback-linearization} as the \emph{\textbf{FL-Newton}} method. However, in the setting where the Hessian information is not available, another choice of $T(x)$ is simply setting it as the identity matrix $T(x) = I$, which is considered in recent works such as \cite{oztoprak_constrained_2021}. In this case, the objective function resembles a proximal operator (cf. \cite{boyd2004convex,parikh2014proximal}), hence we name this as \emph{\textbf{FL-proximal}} method. Due to space limit, we defer a more comprehensive overview of SQP to Appendix \ref{apdx:literature}. {We would also like to emphasize that FL-proximal belongs to the class of first-order methods as its update only requires the first-order information $\nabla f(x), \Jh$.}
\end{rmk}

{

\begin{rmk}[Comparison with other first-order methods]
To this end, we briefly compare FL-proximal methods with other first-order approaches, including Primal-Dual Gradient Descent (PDGD), Projected Gradient Descent (PGD), and the Augmented Lagrangian Method (ALM), with further details in Appendix \ref{apdx:literature}. PDGD has been well studied \citep{kose1956solutions, Wang11control,qu2018exponential}, but is largely limited to convex settings and may fail in nonconvex problems \citep{zhang2020proximal,applegate2024infeasibility}. PGD also struggles with nonconvexity, as projections onto nonconvex sets are often intractable. ALM can handle nonconvex constraints, but each iteration requires solving a potentially expensive nonconvex subproblem. In contrast, when the constraint dimension is small relative to that of $x$, SQP-based methods like ours often perform better in practice \cite{gould2003galahad,liu2018comparison}.

\end{rmk}
}
\section{Extension to inequality constraints}\label{sec:ineq-extension}
The above sections primarily focus on the constrained optimization setting with equality constraints \eqref{eq:optimization-eq-contraint}.
This section aims to address the question of whether we can extend to setting with inequality constraints (Contribution \ref{item:contribution 3}), i.e.,
\begin{equation}\label{eq:optimization-ineq-contraint}
    \textstyle \quad \min_x f(x)
    \qquad s.t. ~~h(x)  \le 0,
\end{equation}
The KKT conditions for the above problem are given by
\begin{equation}\label{eq:KKT-ineq-constraint}
\begin{split}
     \textstyle -\nabla f(x) - J_h(x)^\top\lambda  = 0,\quad 
    h(x) \le 0,\quad 
    \lambda \ge 0,\quad 
    \lambda^\top h(x) = 0
\end{split}
\end{equation}
{To measure convergence, we define the KKT-gap of the state variable $x$ and nonnegative control variable $\lambda \ge 0$ as follows:
\begin{align*}
 \textstyle    \KKTgap(x,\lambda) := \max\left\{\|\nabla f(x) + J_h(x)^{\top} \lambda\|, \left|\lambda^{\top} h(x)\right|,\max_i [h_i(x)]_{+}\right\},\label{eq:KKTgap-ineq}
\end{align*}
where $[h_i(x)]_+ = \max\{h_i(x),0\}$.}

We can still view the problem as a control problem whose corresponding dynamics can be written as
\begin{equation}\label{eq:ineq-constraint-dynamic}
    \begin{split}
        \textstyle \dot x = -T(x)\left(\nabla f(x) + J_h(x)^\top \lambda\right), \quad 
    y = h(x),\quad 
    \lambda \ge 0.
    \end{split}
\end{equation}
However, the problem becomes more complicated because we require the non-negativity constraints $\lambda \ge 0$ and complementary slackness $\lambda^\top h(x)=0$. At first glance, it is unclear how to guarantee these conditions through the control process. However, inspired by the relationship with SQP algorithms, we carefully design a more intricate FL controller as follows:
\begin{tcolorbox}[title={\small FL for Inequality-Constrained Optimization}]
\begin{subequations}\label{eq:feedback-linearization-ineq-md}
\renewcommand{\theequation}{\theparentequation.\arabic{equation}}
\begin{align}
    \dot x & \textstyle = -T(x)\left(\nabla f(x) + J_h(x)^\top \lambda\right) \label{eq:dot-x-ineq}\\
    \lambda &= \textstyle\argmin_{\lambda \ge 0} \left(\frac{1}{2}\lambda^{\top}\JTJ\lambda + \lambda^{\top} \left(J_h(x)T(x)\nabla f(x) - Kh(x)\right)\right) \label{eq:lambda-ineq}
\end{align}
\end{subequations}
\end{tcolorbox}

Here we assume that the optimization problem in equation \eqref{eq:lambda-ineq} admits a finite solution. We would also like to point out that $\lambda$ in \eqref{eq:lambda-ineq} takes the form of the solution of an optimization problem, resulting in a non-smooth trajectory. A similar formulation of non-smooth ordinary differential equations (ODEs) has been explored in the context of differential variational inequalities (cf.~\cite{dupuis1993dynamical,pang_differential_2008,camlibel_lyapunov_2007}).


At first glance, it may not be immediately clear why the algorithm is structured as in \eqref{eq:feedback-linearization-ineq-md}. The derivation of \eqref{eq:feedback-linearization-ineq-md} was inspired by the connection between the FL method and SQP in the equality-constrained setting. Hence, for the inequality-constrained case, we first analyzed SQP and then reverse-engineered its principles to derive its continuous-time counterpart, leading to the formulation of the FL method in \eqref{eq:feedback-linearization-ineq-md}. To ensure a coherent and intuitive presentation, we begin by establishing its relationship with the SQP algorithm.

\paragraph{Relationship with the SQP algorithm}
The corresponding forward Euler discretization of \eqref{eq:feedback-linearization-ineq-md} is given by 
\begin{equation}\label{eq:ineq-forward-Euler-mD}
\begin{split}
&\textstyle x_{t+1} = x_t - \eta T(x)\left(\nabla f(x_t) + J_h(x_t)^\top \lambda_t\right)\\
&\textstyle \lambda_t = \argmin_{\lambda \ge 0} \left(\frac{1}{2}\lambda^{\top}J_h(x_t) T(x_t) J_h(x_t)^\top\lambda + \lambda^{\top} \left(J_h(x_t)T(x_t)\nabla f(x_t) - Kh(x_t)\right)\right)
\end{split}
\end{equation}
We now consider the following SQP type of optimization method
\begin{align}\label{eq:ineq-local-linearization-opt-mD}
    x_{t+1} & \textstyle = \argmin_x \nabla f(x_t)^{\top} (x - x_t) + \frac{1}{2\eta} (x-x_t)^{\top} T(x_t)^{-1}(x-x_t)\notag\\
    &\textstyle \qquad s.t.\quad  h(x_t) + J_h(x_t) (x-x_t) \le 0
\end{align}
The following theorem states the equivalence between \eqref{eq:ineq-forward-Euler-mD} and \eqref{eq:ineq-local-linearization-opt-mD}.
\begin{theorem}\label{theorem:FL-SQP-ineq}
    When $K = \frac{1}{\eta} I$, if \eqref{eq:ineq-local-linearization-opt-mD} is feasible, then the discretization of feedback linearization \eqref{eq:ineq-forward-Euler-mD} is equivalent to the SQP algorithm \eqref{eq:ineq-local-linearization-opt-mD}.
\end{theorem}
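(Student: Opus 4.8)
The plan is to establish the equivalence by showing that both iterations are characterized by the same KKT system, mirroring the strategy already used for Theorem~\ref{theorem:FL-SQP-eq} in the equality-constrained case. First I would write down the KKT conditions for the SQP subproblem~\eqref{eq:ineq-local-linearization-opt-mD}. Since the subproblem is a convex QP (the objective is strongly convex in $x$ because $T(x_t)^{-1}\succeq \frac{1}{\lmax}I$, and the constraints are affine in $x$), its unique minimizer is characterized exactly by the KKT conditions: there exists $\lambda_t\ge 0$ with
\begin{align*}
\nabla f(x_t) + \tfrac{1}{\eta}T(x_t)^{-1}(x_{t+1}-x_t) + J_h(x_t)^\top\lambda_t &= 0,\\
h(x_t)+J_h(x_t)(x_{t+1}-x_t)&\le 0,\qquad \lambda_t^\top\big(h(x_t)+J_h(x_t)(x_{t+1}-x_t)\big)=0.
\end{align*}
Here the affine-constraint structure means a constraint qualification (relaxed Slater, or just the fact that affine constraints always qualify, given feasibility was assumed) holds, so the KKT conditions are both necessary and sufficient; this is the analogue of the ``relaxed Slater condition'' remark in the equality case.

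Next I would solve the stationarity equation for $x_{t+1}$, obtaining $x_{t+1}=x_t-\eta T(x_t)\big(\nabla f(x_t)+J_h(x_t)^\top\lambda_t\big)$, which already matches the primal update in~\eqref{eq:ineq-forward-Euler-mD}. It then remains to show that the multiplier $\lambda_t$ appearing here coincides with the solution of the QP in~\eqref{eq:ineq-forward-Euler-mD}. To do this, I would substitute the expression for $x_{t+1}-x_t$ back into the primal feasibility and complementary slackness conditions, yielding that $\lambda_t\ge 0$ satisfies
\begin{align*}
h(x_t) - \eta J_h(x_t)T(x_t)\big(\nabla f(x_t)+J_h(x_t)^\top\lambda_t\big)&\le 0,\\
\lambda_t^\top\Big(h(x_t) - \eta J_h(x_t)T(x_t)\big(\nabla f(x_t)+J_h(x_t)^\top\lambda_t\big)\Big)&=0.
\end{align*}
With $K=\frac{1}{\eta}I$, multiplying through by $\frac{1}{\eta}$ and rearranging, these are precisely the KKT conditions for the nonnegatively-constrained QP
$\min_{\lambda\ge 0}\ \frac12\lambda^\top J_h(x_t)T(x_t)J_h(x_t)^\top\lambda + \lambda^\top\big(J_h(x_t)T(x_t)\nabla f(x_t)-Kh(x_t)\big)$,
namely: for each $i$, either $\lambda_{t,i}=0$ and the $i$-th gradient component is $\ge 0$, or $\lambda_{t,i}>0$ and the $i$-th gradient component is $0$. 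Since this QP is convex, its KKT conditions characterize its solution set, so $\lambda_t$ as obtained from the SQP subproblem is exactly a solution of~\eqref{eq:lambda-ineq}'s discretization, and conversely. Feeding that $\lambda_t$ into the primal update closes the loop in both directions.

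The main obstacle I anticipate is handling the uniqueness/selection issue for $\lambda_t$: when $J_h(x_t)T(x_t)J_h(x_t)^\top$ is singular (no LICQ is assumed in the inequality setting, unlike Assumption~\ref{assump:eq-JJ}), the QP in~\eqref{eq:lambda-ineq} may have multiple optimal $\lambda$, and correspondingly the SQP subproblem may have multiple optimal multipliers even though $x_{t+1}$ is unique. The right statement of ``equivalence'' is therefore that the primal iterate $x_{t+1}$ is identical and the \emph{sets} of associated multipliers coincide; I would make this precise by noting that the map $\lambda\mapsto J_h(x_t)^\top\lambda$ is constant on the optimal face, so the primal update is well-defined regardless of which optimal $\lambda_t$ is selected. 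The remaining steps — verifying the convex-QP duality/KKT sufficiency and the arithmetic of substituting $x_{t+1}-x_t$ — are routine and parallel to the equality-constrained proof; the one genuinely new ingredient is the complementary-slackness bookkeeping that ties the inequality-QP multiplier to the SQP multiplier, which I would verify component by component.
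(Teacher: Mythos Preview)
Your proposal is correct, and the argument goes through exactly as you sketch: the KKT conditions of the SQP subproblem, after eliminating $x_{t+1}-x_t$ via stationarity, become precisely the KKT conditions of the nonnegatively-constrained $\lambda$-QP in \eqref{eq:ineq-forward-Euler-mD}. Your handling of the non-uniqueness issue is also right; the fact that $J_h(x_t)^\top\lambda$ is constant on the optimal face follows from the standard argument that any two minimizers $\lambda_1,\lambda_2$ of a convex quadratic $\frac12\lambda^\top A\lambda+b^\top\lambda$ satisfy $(\lambda_1-\lambda_2)^\top A(\lambda_1-\lambda_2)=0$, and with $A=J_h T J_h^\top$, $T\succ 0$, this forces $J_h^\top(\lambda_1-\lambda_2)=0$.

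The paper takes a slightly different route: rather than matching KKT systems, it invokes strong duality directly. It shows via a separate lemma (Lemma~\ref{lemma:dual-problem}) that the $\lambda$-QP in \eqref{eq:ineq-forward-Euler-mD} is exactly the Lagrangian dual of the SQP subproblem \eqref{eq:ineq-local-linearization-opt-mD}; feasibility of the latter gives the relaxed Slater condition and hence strong duality, so the $\lambda$-QP optimizer $\lambda^\star$ is a dual optimal multiplier, and the unique primal optimizer is then recovered as $\argmin_x L(x,\lambda^\star)$, yielding the FL update. Your KKT-matching argument is more elementary and self-contained (no auxiliary duality lemma needed), while the paper's duality argument is shorter once the lemma is in hand and makes the structural relationship between the two QPs more transparent. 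Both are standard and equivalent in strength; the paper also sidesteps the multiplier non-uniqueness discussion since it only needs \emph{some} dual optimizer to recover the unique primal.
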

Similar to the proof of Theorem \ref{theorem:FL-SQP-eq}, the proof of Theorem \ref{theorem:FL-SQP-ineq} also leverages strong duality and KKT conditions. The detailed proof is deferred to Appendix \ref{apdx:relationship}.
\paragraph{Convergence Result}
Theorem \ref{theorem:FL-SQP-ineq} demonstrates the relationship between the FL algorithm \eqref{eq:feedback-linearization-ineq-md} and \eqref{eq:ineq-local-linearization-opt-mD}. Since SQP algorithms are known to be capable of converging to a KKT point \cite{nocedal_numerical_2006}, intuitively similar convergence can be established for our FL algorithm \eqref{eq:feedback-linearization-ineq-md}, which is the main focus of the following part.

We define the index set $\cI(x):= \{i: h_i(x) > 0 \}$. Our results rely on the following assumptions:
\begin{assump}\label{assump:convergence-ineq} \label{assump:ineq-JJ} 
    Given the initial state $x(0)$ at $t=0$, the optimization problem in \eqref{eq:lambda-ineq}
    admits a bounded solution $\|\lambda\|_\infty \le L $ for all $x\in \cE$, where $\cE$ is defined by $\cE:= \{x| 0< h_i(x) \le h_i(x(0)), ~\forall i\in \cI(x(0))\}$.
\end{assump}
Although Assumption \ref{assump:ineq-JJ} is quite complicated, there are some simplified versions that serve as a sufficient condition of Assumption \ref{assump:ineq-JJ}. For example, if we start with a feasible $x(0)$, then $\cE = \emptyset$ and hence Assumption \ref{assump:ineq-JJ} is automatically satisfied. Additionally, note that Assumption \ref{assump:eq-JJ} is another sufficient condition of Assumption \ref{assump:ineq-JJ} (see Lemma \ref{lemma:auxiliary-assumptions} in Appendix \ref{apdx:auxiliaries}).  {Notably Assumption \ref{assump:ineq-JJ} is similar to the Mangasarian-Fromovitz constraint qualification (MFCQ) considered in literature \cite{muehlebach_constraints_2022,Ahmed24Control}}

\begin{theorem}\label{thm:convergence-ineq-mD}
    Let Assumption \ref{assump:lipschitz}, \ref{assump:f-lower-bounded} and \ref{assump:convergence-ineq} hold and let the control gain matrix $K$ be a diagonal matrix, i.e., $K = \diag\{k_i\}_{i=1},$ where $k_i >0$. Then the learning dynamics \eqref{eq:feedback-linearization-ineq-md} satisfies the following properties
    \begin{enumerate}[left = 0pt, itemsep = 0pt]
        \item $\frac{dh_i(x(t))}{dt} \le -k_ih_i(x(t))$, and hence the dynamic will converge to the feasible set.
        \item Define $\ell(x) := f(x(t)) + L \sum_i  [h_i(x)]_+$, then $\ell(x(t))$ is non-increasing w.r.t $t$. Here $[h_i(x)]_+ = \max\{h_i(x),0\}.$
        \item The following inequality holds
        \begin{small}
        \begin{align*}
           & \textstyle  \int_{t=0}^T \left(\|\nabla f(x(t))+J_h(x(t))\lambda(t)\|_{T(x(t))}^2 -\sum_{i\in \cI(x)^c} k_i\lambda_i(t)h_i(x(t))\right)dt \le \ell(x(0)) -\ell(x(T))
        \end{align*}
        \end{small}
        \item (Asymptotic convergence and convergence rate) The above statements imply that
        \begin{small}
\begin{align*}
    \inf_{0\le t\le T} \KKTgap(x(t),\lambda(t))&\textstyle \le \max\left\{\sqrt{\frac{2}{\lmin T}\left(f(x(0)) - f_{\min} + L\sum_{i\in\cI(x(0))}h_i(x(0))\right)}, \right.\\
    &\textstyle\left.\frac{1}{\min_i k_i}\frac{2}{T}\left(f(x(0)) - f_{\min} + (L+1)\sum_{i\in\cI(x(0))}h_i(x(0))\right) \right\}\sim O\left(\frac{1}{\sqrt{T}}\right)
\end{align*}
        \end{small}
Further we have that $\KKTgap$ asymptotically converges to zero, i.e.
    $\lim_{t\to+\infty} \KKTgap(x(t), \lambda(t)) = 0$
    \end{enumerate}
\end{theorem}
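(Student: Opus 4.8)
The plan is to follow the blueprint of Theorem~\ref{theorem: FL-convergence-eq}, with the one genuinely new ingredient being the KKT characterization of the quadratic program~\eqref{eq:lambda-ineq} that defines $\lambda(t)$. Write $g := \nabla f(x) + \Jh^\top\lambda$, so that $\dot x = -T(x) g$. Since $\lambda$ minimizes $\tfrac12\lambda^\top\JTJ\lambda + \lambda^\top(\JT\nabla f(x) - Kh(x))$ over $\lambda\ge 0$, there is a multiplier $\mu\ge 0$ with $\JTJ\lambda + \JT\nabla f(x) - Kh(x) = \mu$ and complementary slackness $\lambda^\top\mu = 0$. Substituting this into $\dot y = \Jh\dot x = -\big(\JT\nabla f(x) + \JTJ\lambda\big)$ gives the clean identity
\begin{align*}
\dot h_i(x(t)) = -\mu_i(t) - k_i h_i(x(t)),
\end{align*}
from which Statement~1 is immediate: $\mu_i\ge 0$ gives $\tfrac{d}{dt}\big(e^{k_i t}h_i(x(t))\big)\le 0$, hence $h_i(x(t))\le e^{-k_i t}h_i(x(0))$ and $[h_i(x(t))]_+\le e^{-k_i t}[h_i(x(0))]_+\to 0$; in particular a satisfied constraint stays satisfied, a violated one becomes absorbing once it reaches the boundary, so the trajectory remains in the forward-invariant region on which Assumption~\ref{assump:convergence-ineq} supplies the uniform bound $\|\lambda(t)\|_\infty\le L$.

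For Statement~2 I would differentiate the merit function along the trajectory using the same identity. From $\nabla f(x) = g - \Jh^\top\lambda$ one gets $\tfrac{d}{dt}f(x(t)) = -g^\top T(x) g + \lambda^\top\JT g$, and since $\JT g = \JT\nabla f(x) + \JTJ\lambda = \mu + Kh(x)$, complementary slackness yields $\lambda^\top\JT g = \sum_i k_i\lambda_i h_i(x)$. Differentiating the nonsmooth term (legitimate by absolute continuity: on $\{h_i(x(t)) = 0\}$ one has $\dot h_i = -\mu_i\le 0$, so no ambiguous contribution arises), and combining,
\begin{align*}
\tfrac{d}{dt}\ell(x(t)) = -g^\top T(x) g + \sum_{i} k_i\lambda_i h_i(x) - L\!\!\sum_{i\in\cI(x)}\!\!\big(\mu_i + k_i h_i(x)\big).
\end{align*}
Splitting $\sum_i k_i\lambda_i h_i$ over $\cI(x)$ and $\cI(x)^c$ and using $0\le\lambda_i\le L$ with $h_i(x) > 0$ on $\cI(x)$ (so $k_i\lambda_i h_i \le L k_i h_i$), $h_i(x)\le 0$ on $\cI(x)^c$ (so $k_i\lambda_i h_i\le 0$), and $-L\sum_{i\in\cI(x)}\mu_i\le 0$, this collapses to
\begin{align*}
\tfrac{d}{dt}\ell(x(t)) \ \le\ -g^\top T(x) g + \sum_{i\in\cI(x)^c} k_i\lambda_i h_i(x) \ \le\ 0,
\end{align*}
which is Statement~2; integrating from $0$ to $T$, and noting the integrand $g^\top T(x) g - \sum_{i\in\cI(x)^c}k_i\lambda_i h_i(x)$ is nonnegative, gives Statement~3 (the quadratic term being exactly the weighted-norm term displayed there).

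For Statement~4, monotonicity plus $\ell(x(t))\ge f_{\min}$ gives $\ell(x(0)) - \ell(x(T))\le\Delta := f(x(0)) - f_{\min} + L\sum_{i\in\cI(x(0))}h_i(x(0))$, so Statement~3 yields both $\int_0^T g^\top T(x) g\,dt\le\Delta$ and $\int_0^T\big(-\sum_{i\in\cI(x)^c}k_i\lambda_i h_i(x)\big)dt\le\Delta$. I would then bound the three pieces of $\KKTgap$: (i) $g^\top T(x) g\ge\lmin\|g\|^2$; (ii) $\max_i[h_i(x(t))]_+\le e^{-(\min_i k_i) t}\sum_i[h_i(x(0))]_+$, which decays faster than $1/T$; (iii) $\lambda^\top h(x) = \sum_{i\in\cI(x)}\lambda_i h_i(x) + \sum_{i\in\cI(x)^c}\lambda_i h_i(x)$, where the first sum lies in $\big[0,\,L\sum_i[h_i(x)]_+\big]$ and the second obeys $\big|\sum_{i\in\cI(x)^c}\lambda_i h_i(x)\big|\le\tfrac{1}{\min_i k_i}\big(-\sum_{i\in\cI(x)^c}k_i\lambda_i h_i(x)\big)$. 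A Markov-inequality argument on the window $[T/2,T]$ then produces a single time $t^\star$ at which $g^\top T(x) g$, $-\sum_{i\in\cI(x)^c}k_i\lambda_i h_i(x)$, and $\max_i[h_i(x)]_+$ are simultaneously $O(\Delta/T)$ (resp.\ exponentially small), and substituting into (i)--(iii) gives the stated $\max\{\cdot,\cdot\}$ bound. Asymptotic convergence follows because the two integrals are finite on $[0,\infty)$ and $\dot x = -T(x) g$ is bounded (from $\|\lambda(t)\|_\infty\le L$ and Assumption~\ref{assump:lipschitz}), so the integrands are uniformly continuous and Barbalat's lemma forces them to zero; together with $[h_i(x(t))]_+\to 0$ from Statement~1 this gives $\lim_{t\to\infty}\KKTgap(x(t),\lambda(t)) = 0$.

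The main obstacle I anticipate is not any single estimate but the bookkeeping around the parametric QP~\eqref{eq:lambda-ineq}: one must (a) make the chain-rule manipulations rigorous despite the nonsmoothness of the trajectory and of $\ell$, resting everything on the KKT multiplier $\mu(t)$ whose nonnegativity and complementary slackness with $\lambda(t)$ are precisely what annihilate the cross terms; and (b) verify that the trajectory never leaves the region where Assumption~\ref{assump:convergence-ineq} applies, which needs the forward-invariance from Statement~1 together with a careful check that constraints crossing from violated to satisfied stay absorbing and that the associated $\lambda_i,\mu_i$ behave consistently. Once these are settled, the remaining steps parallel the equality-constrained proof of Theorem~\ref{theorem: FL-convergence-eq} almost verbatim.
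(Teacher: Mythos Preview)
Your proposal is correct and follows essentially the same approach as the paper: the paper also rests everything on the KKT conditions of the QP~\eqref{eq:lambda-ineq} (their Lemma~\ref{lemma:KKT}, with slack variable $s$ playing the role of your $\mu$), obtains $\dot h_i = -k_i h_i - s_i$, computes $\dot f(x) = -\|\nabla f(x)+J_h(x)^\top\lambda\|_{T(x)}^2 + \lambda^\top Kh(x)$ via complementary slackness, splits over $\cI(x)$ and $\cI(x)^c$ using $\|\lambda\|_\infty\le L$, integrates, and then extracts a $t^\star\in[T/2,T]$ exactly as you describe. Your treatment is in fact somewhat tidier in places (defining $g$ and working with $\lambda^\top\JT g$ directly, and invoking Barbalat's lemma explicitly for the asymptotic limit rather than just asserting it from the finite integral), but the substance is identical.
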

Statement 4 in Theorem \ref{thm:convergence-ineq-mD} implies that the algorithm can find an $\epsilon$-first-order KKT-point within time $\frac{1}{\epsilon^2}$. Similar to Theorem \ref{theorem: FL-convergence-eq}, the key step of the proof is to construct the merit function in Statement 2 (detailed proof deferred to Appendix \ref{apdx:ineq}).

\section{Momentum Acceleration for Constrained Optimization}\label{sec:acceleration}
In Remark \ref{rmk:choice-of-T}, we introduced the FL-proximal and FL-Newton algorithms. Generally, FL-Newton achieves faster convergence than FL-proximal due to its use of second-order information. However, in scenarios where Hessian information is unavailable, FL-proximal must be used instead, raising the question of whether its convergence can be accelerated. Given that momentum acceleration has been shown to improve convergence rates in unconstrained optimization, a natural question arises: can a momentum-accelerated version of the FL-proximal algorithm, along with its corresponding discrete-time SQP formulation, achieve faster convergence? This section aims to address this question as part of Contribution \ref{item:contribution 4}.

Momentum acceleration is a technique commonly used in optimization to enhance convergence rates (cf. \cite{polyak_methods_1964,Nesterov,daspremont_acceleration_2021}, see Appendix \ref{apdx:literature} for more detailed introduction about momentum acceleration).  For unconstrained optimization, the discrete-time momentum acceleration for gradient descent generally takes the form of 
\begin{equation}\label{eq:DT-momentum-unconstrained}
    w_{t} = x_t + \beta(x_t - x_{t-1}),\quad
    x_{t+1}= w_t - \eta \nabla f(w_t)
\end{equation}
Its corresponding continuous-time analogue can be written as a second-order ODE \cite{polyak_methods_1964,su2016differential}
\begin{equation}\label{eq:CT-momentum-unconstrained}
    \dot x = z,\quad
    \dot z =-\alpha z - \nabla f(x) 
\end{equation}
 Inspired by the form of \eqref{eq:DT-momentum-unconstrained} and \eqref{eq:CT-momentum-unconstrained}, for equality constrained optimization, we propose the following heuristic momentum-accelerated discrete time SQP scheme
\begin{align}\label{eq:momentum-SQP}
   & \textstyle w_t = x_t + \beta(x_t - x_{t-1})\notag, \quad  x_{t+1} = w_t + \eta \nabla f(w_t) + J_h(w_t)^\top \lambda_t\\
   &\textstyle \lambda_t =  -\left(J_h(w_t) J_h(w_t)^{\top} \right)^{-1}(J_h(w_t) \nabla f(w_t) - \frac{1}{\eta} h(w_t))
\end{align}
and continuous time FL scheme, which we name as \textbf{FL-momentum}:
\begin{tcolorbox}[title={\small FL-momentum for Equality-Constrained Optimization}]
\begin{equation}\label{eq:momentum-FL}
\begin{split}
\dot x &= z\qquad
    \dot z = -\alpha z -\left(\nabla f(x) + J_h(x)^\top \lambda \right)\\
    \lambda&= -(\JJ)^{-1} (J_h(x)\nabla f(x) - Kh(x))
\end{split}
\end{equation}
\end{tcolorbox}
{ Note that compared with FL-proximal \eqref{eq:feedback-linearization-forward-Euler}, the difference in \eqref{eq:momentum-SQP} is the addition of a momentum step $w_t = x_t + \beta(x_t - x_{t-1})$.} Similarly we can propose the FL-momentum scheme for inequality constraint case as follows:
\begin{tcolorbox}[title={\small FL-momentum for Inquality-Constrained Optimization}]
\begin{align}\label{eq:momentum-FL-ineq}
\dot x &= z,\qquad 
    \dot z = -\alpha z -\left(\nabla f(x) + J_h(x)^\top \lambda \right)\\
    \lambda &= \textstyle \argmin_{\lambda \ge 0} \frac{1}{2}\lambda^{\top} J_h(x) J_h(x)^{\top} \lambda + \lambda^{\top} \left(J_h(x)\nabla f(x) - Kh(x)\right) \notag
\end{align}
\end{tcolorbox}

The numerical simulation in Section \ref{sec:numerics} (Figure \ref{fig:logistic-regression}) suggests that momentum methods indeed accelerate the convergence rate. We would also like to note that as far as we know, the acceleration of SQP methods are generally achieved via Newton or quasi-Newton methods, there's little work on exploring acceleration via momentum approaches, which makes our proposed momentum algorithm a novel contribution.

\subsection{Convergence Analysis for FL-momentum: Nonconvex Case}\label{apdx:analysis-momentum-noncvx}
In this section, we provide some convergence guarantees for the proposed algorithm. In particular, we primarily focus on the convergence analysis for the continuous-time algorithm for equality constrained optimization \eqref{eq:momentum-FL}. It remains future work to establish the convergence for the discrete-time algorithm \eqref{eq:momentum-SQP} or the inequality-constrained algorithm \eqref{eq:momentum-FL-ineq}.

We first define the following notation
\begin{align}\label{eq:olambda}
    \olambda(x):= -(\JJ)^{-1} (J_h(x)\nabla f(x))
\end{align}

Apart from Assumption \ref{assump:lipschitz} and \ref{assump:f-lower-bounded}, we also make the following assumptions on $f$ and $h$.
\begin{assump}\label{assump:momentum-1}
    Both $f(x)$, $h(x)$ are three-times differentiable and the derivatives are bounded, thus, we know that there exist some constants $L_f, L_1, L_2$ such that
    \begin{align*}
      \textstyle  \|\nabla^2 f(x)\|\le L_f, ~~\left\|\frac{\partial \bar\lambda(x)}{\partial x}\right\| \le L_1,~~\left\|\frac{\partial\left(J_h(x)^\top \lambda(x) + \left(\frac{\partial \bar\lambda(x)}{\partial x} ^\top h(x)\right)\right)}{\partial x}\right\|\le L_2
    \end{align*}
\end{assump}

\begin{assump}\label{assump: momentum-2}
    We also assume that that there exists a constant $\bar H$ such that $
        \|\bar H(x)\| \le \bar H, ~~\forall x,
    $
    where $\bar H(x):= [h(x)^\top \nabla^2 h_i(x)]_{i=1}^n$.
\end{assump}

We are now ready to state our main result
\begin{theorem}\label{theorem:convergence-momentum}
    Assume that Assumption \ref{assump:lipschitz}, \ref{assump:f-lower-bounded}, \ref{assump:momentum-1} and \ref{assump: momentum-2} hold. Let two positive constants $a_1, a_2$ be such that
      $ a_2 \ge \left(4\frac{\lmax(K)}{\lmin(K)}L_2D + \frac{L_1^2}{\lmin(K)}\right)\times a_1 \ge 0.
$
We define the following merit function:
    \begin{small}
    \begin{align*}
       & \ell(x,z) = a_1 \alpha f(x) + \frac{a_2 \alpha}{2}\|h(x)\|^2 + a_1\alpha \bar\lambda (x)^\top h(x) + \|z\|^2  \\
       &+  \left(a_1\nabla f(x) + a_2 J_h(x)^{\top} h(x) + a_1J_h(x)^{\top}\bar\lambda(x)+ a_1\frac{\partial\bar\lambda(x)}{\partial x}^{\top} h(x) \right)^{\top} z
    \end{align*}
        \end{small}
    then for $
        \alpha \ge \left(a_1(L_f + L_2) + a_2 (M^2 + \bar H) + \frac{1}{a_1} + \frac{2(\lmax(K)D^2)}{a_2}\right) + 1
    $,
we have that
\begin{enumerate}[itemsep = 0pt, topsep=0pt,left =0pt]
    \item $\ell(x(t), z(t))$ is non-increasing with respect to $t$.
    \item  the following inequality holds
    \begin{small}
    \begin{align*}
 &\hspace{-20pt}\int_{t=0}^T  \frac{a_2\lmin(K)}{8} \| h(x(t))\|^2 +
     \frac{a_1}{4}\|\nabla f(x(t)) + J_h(x(t))^\top\olambda(x(t))\|^2 dt 
     \le \ell(x(0), z(0)) - \min_{x,z}\ell(x,z)
    \end{align*}
    \end{small}
    \item We can bound the KKT-gap by
\begin{align*}
    &\textstyle \quad \inf_{0\le t\le T} \KKTgap(x(t),\olambda(x(t)))\le \sqrt{\frac{\ell(x(0), z(0)) - \ell_{\min}}{\min\left\{\frac{a_2\lmin(K)}{8}, \frac{a_1}{4}\right\}T }}\sim O(\frac{1}{\sqrt{T}})
\end{align*}
and  $  \lim_{t\to +\infty} \KKTgap(x(t),\olambda(x(t))) =0,\quad
   \textstyle \lim_{t\to +\infty} \KKTgap(x(t),\lambda(x(t))) =0. $
\end{enumerate}
\end{theorem}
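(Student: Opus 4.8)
\textbf{Proof proposal for Theorem \ref{theorem:convergence-momentum}.}

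The plan is to treat $\ell(x,z)$ as a Lyapunov/merit function and show $\frac{d}{dt}\ell(x(t),z(t)) \le 0$, with the derivative strictly dominating the two quantities $\|h(x)\|^2$ and $\|\nabla f(x)+J_h(x)^\top\olambda(x)\|^2$ that appear in Statement 2. This is the standard structure already used for Theorem \ref{theorem: FL-convergence-eq}, but now the state is $(x,z)$ and the merit function carries the extra "cross term'' $(\,\cdots\,)^\top z$ that is typical of Lyapunov functions for second-order (momentum) ODEs, in the spirit of \cite{wilson_lyapunov_2018,su2016differential}. First I would decompose $\ell = a_1\alpha f(x) + \frac{a_2\alpha}{2}\|h(x)\|^2 + a_1\alpha\bar\lambda(x)^\top h(x) + \|z\|^2 + p(x)^\top z$, where $p(x) := a_1\nabla f(x) + a_2 J_h(x)^\top h(x) + a_1 J_h(x)^\top\bar\lambda(x) + a_1 \frac{\partial\bar\lambda(x)}{\partial x}^\top h(x)$. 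Note that by definition of $\bar\lambda$ in \eqref{eq:olambda}, $\nabla f(x) + J_h(x)^\top\bar\lambda(x)$ is exactly the $T(x)$-orthogonal projection of $\nabla f(x)$ onto $\ker J_h(x)$ (up to the metric), and $\lambda$ in \eqref{eq:momentum-FL} differs from $\bar\lambda(x)$ by the term $(\JJ)^{-1}Kh(x)$; this is the algebraic identity to isolate early, since $\dot z = -\alpha z - (\nabla f + J_h^\top\lambda) = -\alpha z - (\nabla f + J_h^\top\bar\lambda) - J_h^\top(\JJ)^{-1}Kh(x)$, which cleanly separates the "reduced-gradient'' part from the "constraint-restoring'' part.

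Next I would differentiate term by term along $\dot x = z$, $\dot z = -\alpha z - (\nabla f(x)+J_h(x)^\top\lambda)$. The $\|z\|^2$ term contributes $2z^\top\dot z = -2\alpha\|z\|^2 - 2z^\top(\nabla f + J_h^\top\lambda)$; the $a_1\alpha f$, $\frac{a_2\alpha}{2}\|h\|^2$, $a_1\alpha\bar\lambda^\top h$ terms each contribute a gradient-times-$z$ expression (using $\frac{d}{dt}h(x) = J_h(x)z$ and the chain rule on $\bar\lambda(x)$); and $\frac{d}{dt}(p(x)^\top z) = (\frac{\partial p}{\partial x}z)^\top z + p(x)^\top\dot z$. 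The design of $p(x)$ is exactly so that $p(x)^\top\dot z = -\alpha p(x)^\top z - p(x)^\top(\nabla f + J_h^\top\lambda)$, and the $-\alpha p(x)^\top z$ piece cancels the $z$-linear terms coming from differentiating the $\alpha$-weighted potential terms — this is where the factor $\alpha$ multiplying $f$, $\|h\|^2$, $\bar\lambda^\top h$ in $\ell$ is chosen. After these cancellations the surviving negative-definite contributions should be, schematically, $-2\alpha\|z\|^2$ from the kinetic term, plus $-a_1\|\nabla f + J_h^\top\bar\lambda\|^2$-type and $-a_2\lmin(K)\|h\|^2$-type terms from $p(x)^\top(\nabla f + J_h^\top\lambda)$ after substituting $\lambda = \bar\lambda - (\JJ)^{-1}Kh$ and using $h^\top J_h(\JJ)^{-1}Kh \ge \frac{\lmin(K)}{?}\|h\|^2$ via Assumption \ref{assump:eq-JJ}. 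Everything else — the $(\frac{\partial p}{\partial x}z)^\top z$ term, the cross terms between $z$ and $h$, between $z$ and the reduced gradient, the error terms from $\frac{\partial\bar\lambda}{\partial x}$ — must be bounded in magnitude by $C(\|z\|\cdot\|z\| + \|z\|\|h\| + \|z\|\|\nabla f + J_h^\top\bar\lambda\| + \cdots)$ using Assumptions \ref{assump:lipschitz}, \ref{assump:momentum-1}, \ref{assump: momentum-2}, and then absorbed by Young's inequality into the three negative terms, provided $\alpha$ is large enough and $a_2/a_1$ is large enough. Tracking which constant absorbs which cross term is precisely what forces the stated lower bounds $a_2 \ge (4\frac{\lmax(K)}{\lmin(K)}L_2 D + \frac{L_1^2}{\lmin(K)})a_1$ and $\alpha \ge a_1(L_f+L_2) + a_2(M^2+\bar H) + \frac{1}{a_1} + \frac{2\lmax(K)D^2}{a_2} + 1$.

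Once Statement 1 is in hand in the quantitative form $\frac{d}{dt}\ell \le -\frac{a_2\lmin(K)}{8}\|h(x)\|^2 - \frac{a_1}{4}\|\nabla f(x)+J_h(x)^\top\bar\lambda(x)\|^2$ (the $\|z\|^2$ term is discarded, being not needed for the rate), integrating from $0$ to $T$ gives Statement 2 after noting $\ell(x(T),z(T)) \ge \min_{x,z}\ell =: \ell_{\min}$ (which is finite: $f$ is lower bounded, $\|h\|^2 \ge 0$, and the $z$-quadratic $\|z\|^2 + p(x)^\top z$ has minimum $-\frac14\|p(x)\|^2$ over $z$, with $\|p(x)\|$ bounded by Assumptions \ref{assump:lipschitz}, \ref{assump:convergence-ineq}-type bounds on $\bar\lambda$; the $\alpha$-weighted potential then needs to be checked to be bounded below, again using $f \ge f_{\min}$ and the $a_1\alpha\bar\lambda^\top h$ term being dominated by $\frac{a_2\alpha}{2}\|h\|^2$ thanks to the assumed ratio). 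For Statement 3, the KKT-gap with multiplier $\olambda(x)$ is $\max\{\|\nabla f + J_h^\top\bar\lambda\|, \|h\|_\infty\}$ since $\bar\lambda^\top h$ and $[h_i]_+$ are controlled by $\|h\|$; hence $\inf_{0\le t\le T}\KKTgap^2 \le \inf_t \max\{\cdots\}^2 \le \frac{1}{\min\{a_2\lmin(K)/8,\, a_1/4\}}\cdot\frac{1}{T}\int_0^T(\frac{a_2\lmin(K)}{8}\|h\|^2 + \frac{a_1}{4}\|\nabla f + J_h^\top\bar\lambda\|^2)dt \le \frac{\ell(x(0),z(0)) - \ell_{\min}}{\min\{a_2\lmin(K)/8,\,a_1/4\}\,T}$, giving the $O(1/\sqrt T)$ bound. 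Asymptotic convergence to zero follows because $\int_0^\infty(\|h\|^2 + \|\nabla f + J_h^\top\bar\lambda\|^2)dt < \infty$ together with uniform continuity of the integrand (a Barbalat-type argument — the integrand's derivative is bounded since $\dot x = z$ is bounded once we argue $\|z\|$ stays bounded, which follows from $\ell$ being non-increasing and coercive in $z$), and the $\lambda(x)$ version follows from $\lambda(x) - \bar\lambda(x) = -(\JJ)^{-1}Kh(x) \to 0$ as $h(x)\to 0$, together with $\|J_h^\top(\JJ)^{-1}K\| \le MD\lmax(K)$.

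\textbf{Main obstacle.} The hard part is the bookkeeping in the Lyapunov derivative computation: the cross term $\frac{d}{dt}(p(x)^\top z)$ produces both a $(\frac{\partial p}{\partial x}z)^\top z$ term and, via $p(x)^\top\dot z$, a $-p(x)^\top(\nabla f + J_h^\top\lambda)$ term whose expansion (after substituting $\lambda = \bar\lambda - (\JJ)^{-1}Kh$ and expanding $p$ into its four pieces) is a sizeable sum of quadratic and bilinear forms in $(z, h, \nabla f + J_h^\top\bar\lambda)$; correctly identifying which of these are the "good'' negative-definite terms, which are higher-order error terms absorbable via the smoothness constants $L_f, L_1, L_2, \bar H, M, D$, and calibrating $a_1, a_2, \alpha$ so that the Young-inequality splits all close simultaneously, is delicate — this is where one genuinely earns the precise constants in the theorem statement, and getting the ratio condition on $a_2/a_1$ exactly right (rather than merely "large enough'') requires care about which term the $L_1^2/\lmin(K)$ and $L_2 D$ contributions land in.
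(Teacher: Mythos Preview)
Your proposal is correct and follows essentially the same approach as the paper's proof: the paper also differentiates $\ell(x,z)$ along the dynamics, organizing the computation as $\alpha\times(\text{first time derivative})+(\text{second time derivative})$ for each of the three potential terms $f$, $\tfrac12\|h\|^2$, $\bar\lambda^\top h$ (which is precisely your observation that the $-\alpha\,p(x)^\top z$ piece cancels the $z$-linear terms from the $\alpha$-weighted potentials), then expands $-p(x)^\top(\nabla f + J_h^\top\lambda)$ after substituting $\lambda=\bar\lambda-(\JJ)^{-1}Kh$ to extract the negative-definite $-a_1\|P(x)\nabla f\|^2$ and $-a_2 h^\top Kh$ terms, and finally absorbs the cross terms via Young's inequality under exactly the stated conditions on $a_2/a_1$ and $\alpha$. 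Your packaging of the cross term as $p(x)^\top z$ is in fact a slightly cleaner presentation of the same calculation, and your Barbalat-type justification for the asymptotic limit is more careful than the paper's (which simply asserts it).
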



The detailed proof is provided in Appendix \ref{apdx:acceleration}. 
\begin{rmk}[Limitation of the result]
    One limitation of Theorem \ref{theorem:convergence-momentum} is that it establishes convergence but not acceleration over FL-proximal. However, when the constraint function $h(x)$ is affine, the algorithm is equivalent to the momentum-accelerated projected gradient method (see Appendix \ref{apdx:intuition}), offering insight into its potential for accelerating optimization. 
\end{rmk}

\section{Numerical Verifications}\label{sec:numerics}
For numerical validation, we consider a logistic regression problem involving heterogeneous clients \cite{shen_agnostic_2022,hounie2024resilient}. Many scenarios, such as federated learning and fair machine learning, require training a common model in a distributed manner by utilizing data samples from diverse clients or distributions. In practice, heterogeneity in local data distributions often results in uneven model performance across clients \cite{li2020federated,wang2020optimizing}. Since this outcome may be undesirable, a reasonable objective in such settings is to add constraints to ensure that the model's loss is comparable across all clients.

We formulate the above problem as a constrained optimization problem as follows: consider solving the logistic regression for $C$ clients. For each client $c\in \{1,2,\dots,C\}$, it is associated with its own dataset $D_c = \{(x_i, y_i)\}_{i=1}^{|D_c|}$, where the label is $y_i \in \{-1,1\}$ and data feature is $x_i \in R^{d}$. For each client $c$, its own logistic regression loss $R_c(\theta)$ is defined as
\begin{align*}
  \textstyle  R_c(\theta) := \frac{1}{|D_c|} \sum_{i\in D_c} \log(1+\exp(-y_i \cdot \theta^\top x_i)),
\end{align*}
where $\theta$ is the parameter of the regression model. We further define the averaged regression loss $\bar R(\theta)$ as
$ \bar R(\theta) := \frac{1}{C} \sum_{c=1}^C f_c(\theta)
.$

As suggested in \cite{shen_agnostic_2022,hounie2024resilient}, heterogeneity challenges can be addressed by introducing a proximity constraint that links the performance of each individual client, \(R_c\), to the average loss across all clients, \(\bar{R}\). This approach naturally formulates a constrained learning problem \footnote{ It is not hard to verify that with probability one Equation \eqref{eq:constrained-logistic} satisfies Assumption \ref{assump:f-lower-bounded} and \ref{assump:ineq-JJ}. Also within a compact region, the function satisfies Assumption \ref{assump:lipschitz} (Lipschitz property) as well}
\begin{align}\label{eq:constrained-logistic}
 \textstyle  \min_\theta \bar R(\theta), ~~~~~s.t. ~ R_c(\theta) - \bar R(\theta) - \epsilon \le 0, ~\forall c\in\{1,2,\dots, C\} 
\end{align}
where $\epsilon > 0$ is a small, fixed positive scalar.

\begin{figure}[htbp]
    \centering
    \includegraphics[width=0.8\linewidth]{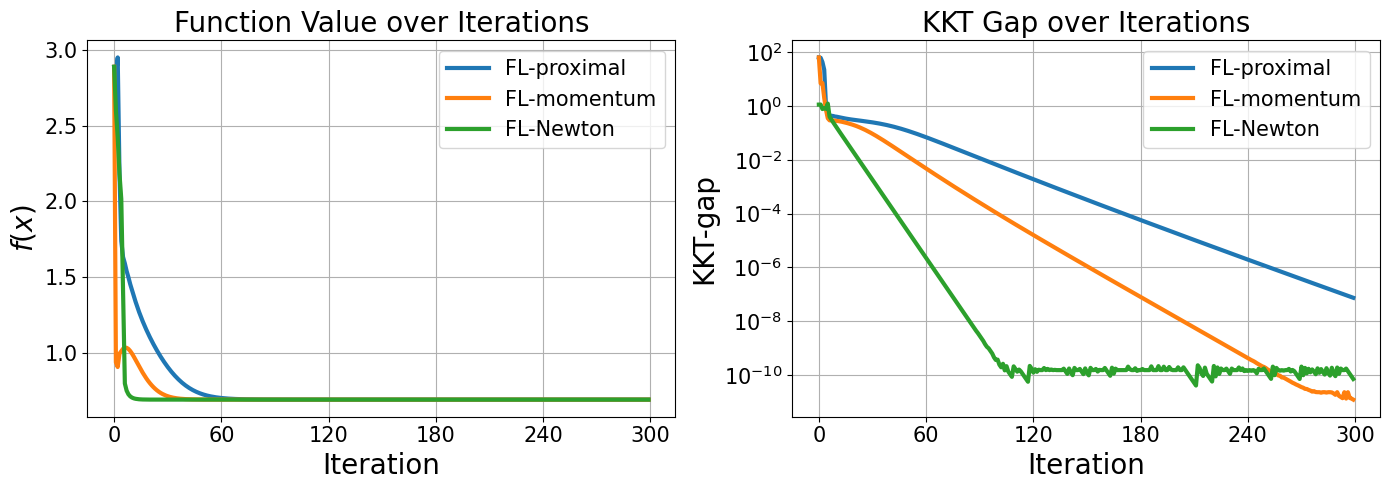}
    \caption{Result for Heterogeneous Logistic Regression}
    \label{fig:logistic-regression}
\end{figure}

We solve the constrained optimization problem \eqref{eq:constrained-logistic} by running the FL-proximal, FL-Newton,  and FL-momentum algorithm . Here we set the number of clients to $C = 5$ and $|D_c|=200$, the data $y_i$ is randomly generated from a Bernoulli distribution and $x_i$ is generated from a Gaussian distribution whose mean differs among different agents. The results of the numerical simulation are presented in Figure \ref{fig:logistic-regression}. All algorithms converge to a first-order KKT point. FL-Newton converges fastest owing to its use of second-order (Hessian) information, while among first-order methods, FL-momentum outperforms FL-proximal in convergence speed. More comprehensive numerical results are provided in Appendix \ref{sec:apdx-numerics}.

\vspace{-5pt}

\section{Conclusion}
In this paper, we study the theoretical foundations for solving constrained optimization problems from a control perspective via feedback linearization (FL). We established global convergence rates for equality-constrained optimization, highlighted the relationship between FL and Sequential Quadratic Programming (SQP), and extended FL methods to handle inequality constraints. Furthermore, we introduced a momentum-accelerated FL algorithm, which empirically demonstrated faster convergence and provided rigorous convergence guarantees for its continuous-time dynamics.  Future directions include exploring the potential extension to zeroth-order optimization settings and relaxing assumptions in the theoretical analysis.

Several limitations of our framework remain. First, our analysis ensures convergence of the best-found iterate to a first-order KKT point, but does not distinguish between local optima, global optima, and saddle points. Extending the framework to guarantee convergence to local optima via saddle-point escape mechanisms is an important direction for future work. Second, the applicability of feedback linearization depends on structural assumptions such as the Linear Independence Constraint Qualification (LICQ). Third, while FL-momentum empirically accelerates convergence, our current analysis yields the same $O(1/\sqrt{T})$ rate as its non-accelerated counterpart. Relaxing these assumptions and strengthening convergence guarantees remain promising directions for future research.

\section*{Acknowledgement}
This work was supported in part by NSF ASCENT under Grant 2328241 and in part by the NSF AI Institute under Grant 2112085. Runyu Zhang was supported by the MIT Postdoctoral Fellowship Program for Engineering Excellence.

\bibliographystyle{plainnat}
\bibliography{references,bib}

\newpage
\appendix
\onecolumn
\section{Related Literature}\label{apdx:literature}

\paragraph{Control and Dynamical System Perspective for Optimization} 
As mentioned in the introduction, many papers analyze the performance continuous time optimization algorithms in different settings. For unconstrained cases, various continuous algorithms are studied such as gradient flow \cite{elkabetz_continuous_2021,arora_convergence_2019,saxe2013exact,garg_fixed-time_2021,andrei_gradient_nodate}, stochastic gradient \cite{raginsky_non-convex_2017,xie_diffusion_2021}, Nesterov acceleration \cite{su2016differential,wilson_lyapunov_2018,muehlebach2019dynamical}, and momentum acceleration \cite{polyak_methods_1964,polyak_lyapunov_2017}. Notably, control-theoretic tools, such as integral quadratic constraints (IQC) \cite{lessard_analysis_2016, hu_unified_2017}, Proportional Integral Derivative (PID) control \cite{hu_control_2017}, are used to bring interesting insights to the convergence property of optimization algorithms. However, for constrained cases, less work is known. One direction focuses on the dynamics of PDGD algorithm, which primarily focuses on the convex setting and establishing global convergence \cite{kose1956solutions,Wang11control,ding2019global,zeng_dynamical_2023,cerone_new_2024}. However, this approach is primarily limited to convex optimization. For nonconvex optimization, prior research has explored modifications of the gradient flow to accommodate equality constraints (see, e.g., \cite{yamashita_differential_1980, schropp_dynamical_2000, wang_unified_2003}) { as well as inequality constraints \cite{Ahmed24Control,Muehlebach22Constraints,muehlebach2023accelerated}} . The algorithms proposed in these works share similarities with the feedback linearization approach considered in \cite{cerone_new_2024} and this paper. Notably, \cite{schropp_dynamical_2000} also highlighted connections to sequential quadratic programming (SQP).  This paper advances the existing literature in several key ways: i) while prior analyses \cite{yamashita_differential_1980, schropp_dynamical_2000, zhou_convergence_2007} primarily focus on equality constraints, our results extend the algorithm to handle inequality constraints; ii) although the algorithms in these works resemble ours, we introduce a novel perspective based on feedback linearization; iii) previous convergence results are largely asymptotic, whereas we establish non-asymptotic convergence rates in terms of the KKT gap; and iv) We establish acceleration results through the momentum method, further contributing to the theoretical understanding of the algorithm. {Notably, \cite{muehlebach2023accelerated} also considers a momentum-based acceleration for constrained optimization, however, both the continuous-time dynamics and the discretization approach are different, and thus resulting in distinct algorithms.}


Another related line of research explores real-time optimization from a systems perspective \cite{dorfler_toward_2024, hauswirth_timescale_2021, hauswirth_optimization_2024, he_model-free_2024, zhao2013power}. These works primarily focus on scenarios where the optimizer (controller) interacts with a physical dynamical system (plant) and employ control-theoretic approaches, such as time-scale separation and the small-gain theorem, to design and analyze the performance of real-time interactive optimization schemes. In contrast, our setting differs in that the plant itself corresponds to the gradient dynamics of the Lagrangian, while the controller is represented by the dual variable $\lambda$.

A series of works have also explored the properties of learning and optimization in neural networks using tools from control and dynamical systems, including the proportional-integral-derivative (PID) approach \cite{an_pid_2018}, quadratic constraints \cite{fazlyab_safety_2022}, and Lyapunov stability \cite{rodriguez_lyanet_2022}. However, as these studies focus on different problem settings and techniques, they fall outside the scope of this paper.

\paragraph{Sequential Quadratic Programming (SQP)} 
Originally proposed in \cite{wilson1963simplicial, han1977globally, powell1978algorithms}, SQP serves as a powerful optimization algorithm for nonconvex constrained optimization. Generally speaking, the algorithm talks the following form (cf. \cite{boggs_sequential_1995})
\begin{align*}
x_{t+1} & = \argmin_x \nabla f(x_t)^\top (x - x_t) + \frac{1}{2} (x - x_t)^\top H(x_t) (x - x_t)\\
 &\qquad s.t.\quad  h(x_t) + J_h(x_t) (x-x_t) = \textup{(}\!\le\!\textup{) }  0.
\end{align*}
For faster convergence, matrix $H(x_t)$ is generally set as the Hessian matrix of the objective function $f$. However, in the setting where only first order information is available, \cite{curtis_bfgs-sqp_2017} has employed Broyden-Fletcher-Goldfarb-Shanno (BFGS)
quasi-Newton Hessian approximations. In our FL-proximal algorithm, the matrix $H(x_t)$ as the scalar matrix $H(x_t) = \frac{1}{\eta} I$, which is also a common choice when second-order information is unavailable \cite{oztoprak_constrained_2021}. We would also like to note that as far as we know, the acceleration of SQP methods are generally achieved via Newton or quasi-Newton methods, there's little work on exploring acceleration via momentum approaches, which makes our proposed momentum algorithm \eqref{eq:momentum-SQP},\eqref{eq:momentum-FL} a novel contribution.

Note that our convergence results rely on the linear independent constraint qualification (LICQ) assumption (Assumption \ref{assump:eq-JJ}). In literature, there are works that seek to replace this restrictive assumption with milder conditions while still maintaining convergence properties, which is generally known as the stabilized SQP (cf. \cite{gill_globally_2013,gill_stabilized_2017,pang_stabilized_1999,wright_superlinear_1998,zhang_superlinear_2024}). It is an interesting open question whether we can borrow insights from stabilized SQP to modify our FL algorithms such that the LICQ assumption can be removed.


\paragraph{Other Constrained Optimization Algorithms} Apart from SQP methods, there are also other optimization algorithms for solving constrained optimization. Interior point method (IPM, cf. \cite{dikin1967iterative,potra2000interior}) is one of the most deployed algorithm for nonconvex constrained optimization. The algorithm involves navigating through the interior of the feasible region to reach an optimal solution. A common approach within IPMs involves the use of barrier functions to prevent iterates from approaching the boundary of the feasible region. By incorporating these barrier terms into the objective function, the algorithm ensures that each iteration remains within the feasible region's interior. Newton's method is then applied to solve the modified optimization problem, iteratively updating the solution estimate until convergence criteria are met. This methodology allows IPMs to efficiently handle large-scale optimization problems with numerous constraints. While both IPMs and SQP methods can address similar optimization challenges, they differ fundamentally in their approaches: IPMs focus on maintaining iterates within the interior of the feasible region using barrier functions, whereas SQP methods iteratively solve quadratic approximations, often employing active-set strategies to handle constraints. 


Augmented Lagrangian Methods (ALMs) (cf. \cite{birgin2014practical,curtis_adaptive_2015}), also known as the method of multipliers, is another popular approach to solve large-scale constrained optimization problems by transforming them into a series of unconstrained problems. This transformation is achieved by augmenting the standard Lagrangian function with a penalty term that penalizes constraint violations, thereby facilitating convergence to the optimal solution.

Further, in convex optimization, proximal gradient methods (cf. \cite{boyd2004convex,parikh2014proximal}) can be applied for a wide class of optimization problems that take the form of
\begin{align*}
    \min_x f(x) + g(x)
\end{align*}
where $f(x)$ is a smooth convex function and $g(x)$ is a convex but potentially nonsmooth, non-differentiable function. The proximal gradient method iteratively updates the solution estimate by:
\begin{align*}
    x_{t+1} = \textup{prox}_{\eta g} \left(x_t - \nabla f(x_t)\right),
\end{align*}
where $\eta$ is the stepsize and $\text{prox}_{\eta g}$ denotes the proximal operator associated with $ g$ defined as:
\begin{align*}
   \text{prox}_{\alpha g}(x) = \arg\min_{{y}} \left\{ g(y) + \frac{1}{2\alpha} \|y - x\|^2 \right\}
\end{align*}

In particular, for convex constrained optimization, we can set $g(x)$ as the indicator function:
\begin{align*}
    g(x):= \left\{
    \begin{array}{ll}
         0&  \textup{if } h(x) =  (\le)~ 0\\
         +\infty& \textup{Otherwise} 
    \end{array}
    \right. 
\end{align*}
and in this setting, the proximal operator is equivalent to projection onto the feasible set $h(x) = (\le)~0$. One limitation of the proximal gradient type of algorithms is that it is hard to generalize to the nonconvex setting and that the projection onto the feasible set might be hard to compute the proximal operator for complex $h(x)$.

\paragraph{Acceleration Algorithms} First-order acceleration methods, such as momentum and Nesterov Accelerated Gradient (NAG), are key advancements in gradient-based optimization. 
Momentum, introduced by \cite{polyak_methods_1964}, enhances gradient descent by incorporating an extrapolation step that accelerates convergence. Specifically, the update rule \( w_t = (1 + \beta)x_t - \beta x_{t-1} \) extrapolates the current position \( x_t \) by considering the previous position \( x_{t-1} \), effectively predicting a future point in the parameter space. This anticipatory step allows the optimization process to gain speed, particularly in regions where the objective function exhibits gentle slopes or low curvature. Nesterov's Accelerated Gradient \cite{Nesterov} refines momentum by proposing a more refined extrapolation parameter $\beta_t$ that possibly varies with iterations, enabling more precise adjustments and achieving faster convergence rates.

Note that momentum or Nesterov acceleration can also be applied to proximal gradient methods (cf. \cite{beck2009fast,scheinberg_fast_2014}), i.e.
\begin{align*}
    y_{t} &= x_t + \beta_t (x_t - x_{t-1})\\
    x_{t+1} &= \textup{prox}_{\alpha g} \left(y_t - \eta\nabla f(y_t)\right).
\end{align*}
By integrating a momentum term, it is able to accelerate convergence while maintaining the benefits of the proximal operator in handling nonsmooth $g(x)$. We would also like to note that when $g(x)$ is chosen as the indicator function of $h(x) = Ax + b = 0$ then the momentum accelerated proximal gradient method recovers our momentum SQP scheme \eqref{eq:momentum-SQP}.

{
\section{More Numerical Simulations}\label{sec:apdx-numerics}
\subsection{Comparison with other first-order methods}
Here we also compare the performance of our algorithm with existing first order methods such as primal dual gradient descent (PDGD) and Augmented Lagrangian method (ALM), as show in the figure below:
\begin{figure}[htbp]
    \centering
    \includegraphics[width=0.8\linewidth]{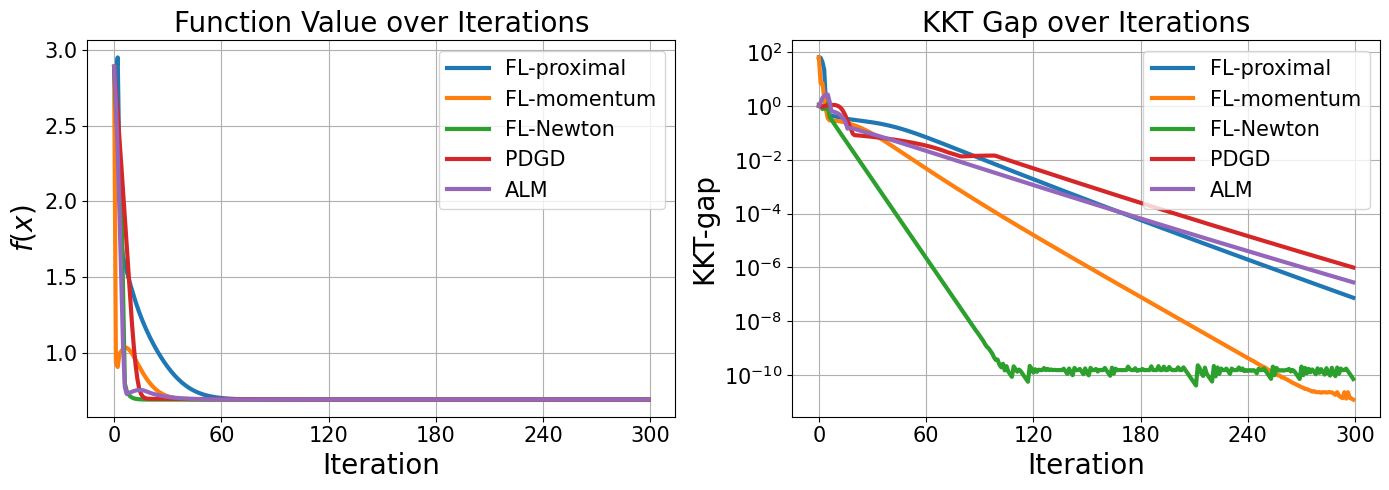}
    \caption{Running different constrained optimization algorithm for Problem \eqref{eq:constrained-logistic}}
    \label{fig:enter-label}
\end{figure}

From the above figure, we note that our FL-proximal method performs comparably with existing methods such as PDGD and ALM. And as expected, FL-momentum converges faster as it leverages momentum acceleration and FL-Newton performs the best as it leverages the second order information.
\subsection{Generalization of the FL algorithm using PI Control} \label{sec:apdx-numerics-PI}
As pointed out by Remark \ref{rmk:computational-complexity}, one drawback of the FL-proximal is that it requires calculation of the inverse matrix of $\JJ$, which might be inefficient when the number of constraints becomes larger. In reality, there are ways to solve the inverse of $\JJ$ approximately using heuristic methods or conjugate gradient. 

Assume that instead of calculating $(\JJ)^{-1}$ we approximate it with some matrix $ F(x) \approx (\JJ)^{-1}$. For example, in the setting where $\JJ$ is diagonal dominant, we can heuristically set $F(x)$ to be a diagonal matrix where the diagonal element of $F(x)$ is the inverse of the corresponding entry of $\JJ$, i.e.,
\begin{align}\label{eq:approximate-JJ-inv}
    F(x):= \left(\diag(\JJ)\right)^{-1}.
\end{align}

Thus, it is of great importance to study the algorithm's robustness towards the approximation error of $F(x)$. We are going to see in this section that it might be beneficial to consider a PI controller for feedback linearization.

We begin by introducing the Approximated FL algorithm for equality constrained optimization as follows:
\begin{tcolorbox}[title={\small Approximated FL}]
\begin{align*}
\vspace{-20pt}
    \dot x &= -\left(\nabla f(x) + J_h(x)^\top \lambda\right)\\
    \lambda &=-F(x)(\JT \nabla f(x) -Kh(x))\notag
\end{align*}
\end{tcolorbox}
Note that the only difference of Approximated FL comparing with FL-proximal is that we replace $(\JJ)^{-1}$ with its approximation $F(x)$.

As mentioned in Remark \ref{rmk:FL-extension}, apart from considering the feedback linearization with a proportional controller, i.e. $\dot y = -Ky$, we can also consider a more general algorithm variant where we also include the integral term and thus formulate the dynamics as 
\begin{align*}
    \textstyle \dot y = -K_py - K_i \int_0^t y(s)ds
\vspace{-3pt}
\end{align*}

This gives rises to the (Approximated) FL-PI algorithm as follows:
\begin{tcolorbox}[title={\small Approximated FL-PI}]
\begin{align*}
\vspace{-20pt}
    \dot x &= -\left(\nabla f(x) + J_h(x)^\top \lambda\right)\\
    \lambda &=-F(x)\left(\JT \nabla f(x) -\left(K_ph(x) + K_i \int_{s=0}^t h(x(s))ds\right)\right)\notag
\end{align*}
\end{tcolorbox}
\begin{figure}
    \centering
    \includegraphics[width=0.8\linewidth]{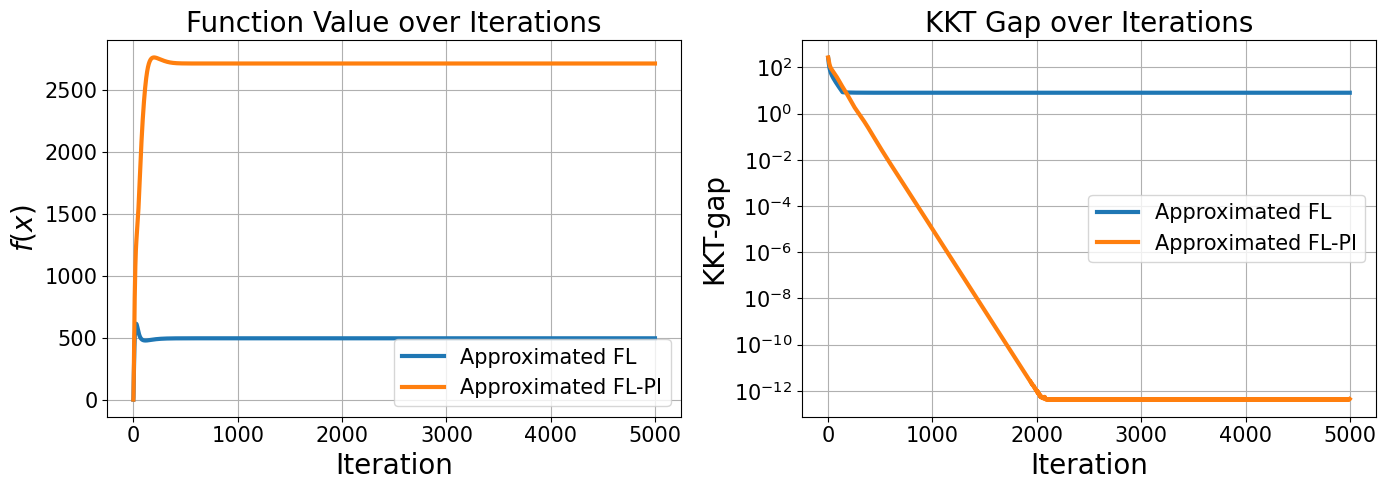}
    \caption{Comparison of Approximated FL and Approximated FL-PI algorithm}
    \label{fig:FL-approximate}
\end{figure}

To test the robustness of the algorithm towards approximation error, we run both Approximated FL and Approximated FL-PI algorithm on a quadratic programming problem
\begin{align*}
    \min_x \frac{1}{2}x^\top Q x + c^\top x\\
    s.t. ~~Ax + b\le 0
\end{align*}
where $m = 10, n = 20$, and $A,b,c,Q^{1/2}$ are all random matrices or vectors whose entries are generated following an i.i.d random Gaussian distribution.

Figure \ref{fig:FL-approximate} demonstrates the algorithm performance for both Approximated FL and Approximated FL-PI. Interestingly, in this setting, Approximated FL fails to converge to the optimal solution, whereas Approximated FL-PI is able to find the optimal solution. This indicates that it is beneficial to include an additional integral controller term into the optimization algorithm.}

\subsection{More numerical examples: Optimal Power Flow}
\vspace{-3pt}
The Alternating Current Optimal Power Flow (AC OPF) problem is a fundamental optimization task in power systems. Its goal is to determine the most efficient operating conditions while satisfying system constraints. This involves optimizing the generation and distribution of electrical power to minimize costs, losses, or other objectives while ensuring that physical laws (such as power flow equations) and operational limits are respected, thus it can be summarized as the following constrained optimization problem:
\newcommand{\eq}{\textup{eq}}
\newcommand{\ineq}{\textup{ineq}}
\begin{align}\label{eq:AC-OPF}
  \textstyle  \min_x f(x), ~~ s.t.~ h_{\eq}(x) = 0, ~h_{\ineq} (x)\le 0,
\end{align}
where the objective function $f(x)$ represents the power generation cost and the equality constraints $h_\eq(x)$ generally represents the physical law of the power system, i.e., the power flow equations and $h_{\ineq}$ includes operational limits in terms of voltage, power generation, transmission capacities etc. The optimization variable $x$ generally consists
of voltage angles and magnitudes at each bus, and the
real and reactive power injections at each generator (see \cite{low2014convex} for a detailed introduction on AC OPF).

\begin{figure}[htbp]
    \centering
    \includegraphics[width=0.4\linewidth]{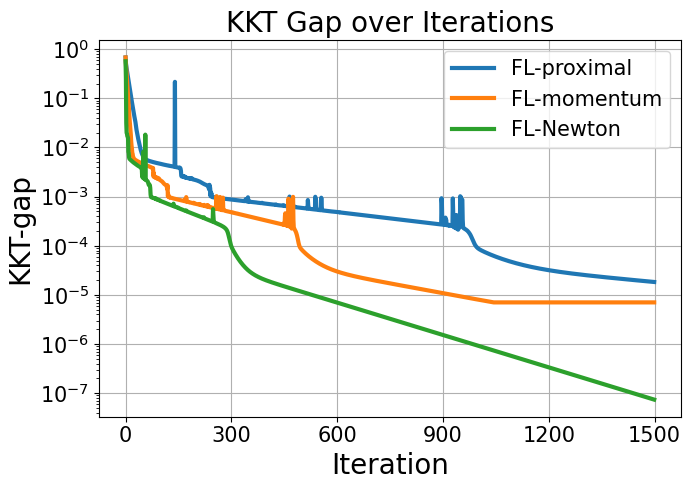}
    \includegraphics[width=0.4\linewidth]{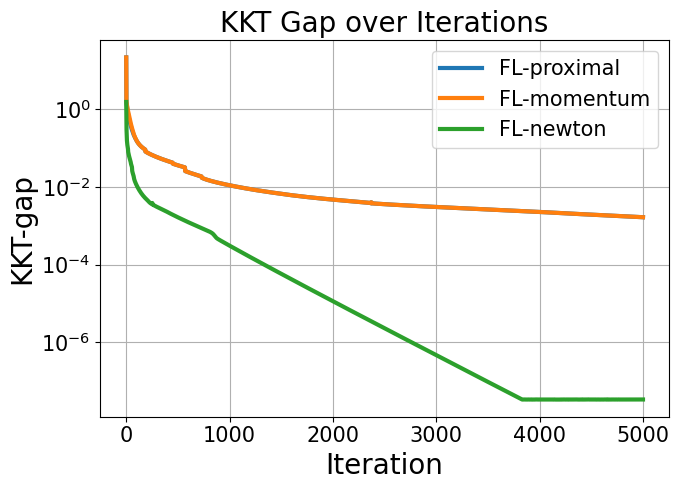}
    \vspace{-5pt}
    \caption{\small \centering Result for AC OPF on\\ IEEE-39 bus (left) and IEEE-118 bus (right) bus system}
    \label{fig:OPF}
\end{figure}

We solve the AC OPF problem \eqref{eq:AC-OPF} by running the FL-proximal algorithm, FL-Newton algorithm, and FL-momentum algorithm. Figure \ref{fig:OPF} presents the numerical results for solving AC OPF on the IEEE-39 and IEEE-118 bus systems, respectively. In both cases, FL-Newton demonstrates the fastest convergence, which is expected given that it leverages second-order information (i.e., the Hessian). Comparing FL-proximal and FL-momentum, both of which rely solely on first-order information, Figure \ref{fig:OPF} indicates that FL-momentum accelerates the learning process and achieves faster convergence than FL-proximal for the IEEE-39 bus system. However, in the IEEE-118 bus system, FL-proximal and FL-momentum exhibit similar convergence speeds, with their learning curves nearly overlapping. We hypothesize that this problem is ill-conditioned, limiting the effectiveness of momentum in accelerating the algorithm.

\section{Proof of Theorem \ref{theorem: FL-convergence-eq}}\label{apdx:theorem-convergence-eq}
\begin{proof}[Proof of Theorem \ref{theorem: FL-convergence-eq}]

    Statement 1 is obvious from the derivation in Section \ref{sec:prelim} because we have that $\dot y_i = - k_i y_i $, thus $y_i(t) = e^{-k_it}y_i(0)$ always keeps the same sign, since $y_i(t) = h_i(x(t))$, we have completed the proof for statement 1.

    For statement 2, we first derive the time derivative for $f(x)$. For notational simplicity we define $P(x):= T(x) - \TJ \left(\JTJ\right)^{-1} \JT$. Note that $P(x) \succeq 0$ and that $P(x)T(x)^{-1}P(x) = P(x)$.
    \begin{align*}
        \dot f(x) &= \nabla f(x)^\top \dot x\\
        & = -\nabla f(x)^\top T(x)\nabla f(x) - \nabla f(x)^\top \TJ \lambda\\
        &= -\nabla f(x)^\top T(x)\nabla f(x) + \nabla f(x)^\top \TJ \left(\JTJ\right)^{-1} \left(\JT \nabla f(x)-Kh(x)\right)\\
        & = - \nabla f(x)^\top P(x)\nabla f(x) - \nabla f(x)^\top \TJ\left(\JTJ\right)^{-1}Kh(x)\\
        & \le - \nabla f(x)^\top P(x)\nabla f(x) + \|\nabla f(x)^\top \TJ\left(\JTJ\right)^{-1}\|_\infty \sum_{i=1}^n k_i |h_i(x)|\\
        & \le - \nabla f(x)^\top P(x)\nabla f(x) + \frac{\lmax}{\lmin}(MD)^2\sum_{i=1}^n k_i |h_i(x)|.
    \end{align*}
    Further, from statement 1 we have that
    \begin{align*}
        \frac{d |h_i(x)|}{dt}  = -k_i|h_i(x)|,
    \end{align*}
    thus we have
    \begin{align}
        &\quad \frac{d \left(f(x)+\frac{\lmax}{\lmin}MD\sum_{i=1}^m |h_i(x)|\right)}{dt}\notag\\
        &\le- \nabla f(x)^\top P(x)\nabla f(x)+ \frac{\lmax}{\lmin} (MD)^2 \sum_{i=1}^n k_i |h_i(x)| - \frac{\lmax}{\lmin}(MD)^2 \sum_{i=1}^n k_i |h_i(x)|\notag \\
        &\le- \nabla f(x)^\top P(x)\nabla f(x) \le 0,\label{eq:convergence-proof-eq}
    \end{align}
    i.e.,
    $\ell(x(t)):=f(x(t)) + (MD)^2\sum_{i=1}^m  |h_i(x(t))|$ is non-increasing, which completes the proof. Note that $\ell(x(t))$ is always differentiable with respect to $t$ because based on Statement 1, $x(t)$ will always stay in $\cE_i$ (or $\cE_i^c$) if the initialization $x(0) \in \cE_i$ (or $\cE_i^c$).

    Now we prove statement 3. Since $f$ is lower bounded and that $h(x) \to 0$ asymptotically, we arrive at the conclusion that the trajectory of $\ell(x(t))$ is bounded. From \eqref{eq:convergence-proof-eq} we have that
    \begin{align*}
        &\frac{d\ell(x(t))}{dt} \le - \nabla f(x(t))^\top P(x(t))\nabla f(x(t))\\
        \Longrightarrow~~ &\ell(x(T)) - \ell(x(0)) \le - \int_{t=0}^T  \nabla f(x(t))^\top P(x(t))\nabla f(x(t))dt  = -\int_{t=0}^T  \| P(x(t))\nabla f(x(t))\|_{T(x(t))^{-1}}^2dt\\
        \Longrightarrow~~& \int_{t=0}^T  \| P(x(t))\nabla f(x(t))\|_{T(x(t))^{-1}}^2dt \le \left(\ell(x(0)) - \ell(x(T))\right).
    \end{align*}
    Further, it is not hard to verify that
    \begin{align*}
        P(x(t))\nabla f(x(t)) &= T(x(t))\nabla f(x(t)) - \TJ\left(\JTJ\right)^{-1} \JT\nabla f(x(t)) \\
        &= T(x(t))\left(\nabla f(x(t)) + J_h(x(t))^\top \olambda(t)\right),
    \end{align*}
    so we have
    \begin{align*}
        \int_{t=0}^T \| P(x(t))\nabla f(x(t))\|_{T(x(t))^{-1}}^2 dt=  \int_{t=0}^T \| \nabla f(x(t)) + J_h(x(t))^\top \olambda(t)\|_{T(x(t))}^2dt\le \ell(x(0)) - \ell(x(T)).\\
        \Longrightarrow  \int_{t=0}^T \| \nabla f(x(t)) + J_h(x(t))^\top \olambda(t)\|^2 dt\le  \frac{1}{\lmin}\left(\ell(x(0)) - \ell(x(T))\right)
    \end{align*}
    We also have that
    \begin{align*}
        \|\lambda(t) - \olambda(t)\| = \|\left(\JTJ\right)^{-1} Kh(x(t))\| \le \frac{1}{\lmin}D^2\|K\|\|h(x_t)\| \to 0, \textup{ as } t\to +\infty
    \end{align*}
    which completes the proof of statement 3.

    We now prove statement 4. From statement 3 we have that
    \begin{align*}
      & \inf_{\frac{T}{2}\le t\le T}  \|\nabla f(x(t)) + J_h(x(t))^\top \olambda(t) \|^2 \le \frac{2}{\lmin T} \ell(x(0)) - \ell(x(T))\\
       &\qquad\qquad\qquad\qquad\qquad\qquad\qquad\quad~~\le\frac{2}{T} \left(\frac{f(x(0)) - f_{\min}}{\lmin} +\frac{\lmax M^2D^2}{\lmin^2}\sum_i |h_i(x(0))| \right)&\\
       \Longrightarrow~~&\inf_{\frac{T}{2}\le t\le T}  \|\nabla f(x(t)) + J_h(x(t))^\top \olambda(t) \|\le \sqrt{\frac{2}{T} \left(\frac{f(x(0)) - f_{\min}}{\lmin} +\frac{\lmax M^2D^2}{\lmin^2}\sum_i |h_i(x(0))| \right)}.
    \end{align*}
    Further, from statement 1 
    \begin{align*}
        |h_i(x(t))| \le h_i(x(0))e^{-\frac{k_iT}{2}}, ~~\textup{ for all } t\ge \frac{T}{2}, 
    \end{align*}
    thus
    \begin{align*}
       & \inf_{\frac{T}{2}\le t\le T} \KKTgap(x(t),\olambda(t)) = \inf_{\frac{T}{2}\le t\le T} \max\{\|\nabla f(x(t)) + J_h(x(t))^\top \olambda(t)\|, \max_{1\le i \le m}\{\|h(x(t))\|_\infty\}\}\\
        &\le\max\left\{\sqrt{\frac{2}{T} \left(\frac{f(x(0)) - f_{\min}}{\lmin} +\frac{\lmax M^2D^2}{\lmin^2}\sum_i |h_i(x(0))| \right)}, \max_{1\le i\le m} \left\{h_i(x(0))e^{-\frac{k_iT}{2}}\right\} \right\}.
    \end{align*}

    Additionally, it is not hard to verify from statement 3 and statement 1 that
    \begin{align*}
        \lim_{t\to+\infty}\|\nabla f(x(t)) + J_h(x(t))^\top \olambda(t)\| = 0,
        \lim_{t\to+\infty} \|h(x(t))\|_\infty = 0,
        \lim_{t\to +\infty} \|\olambda(t) - \lambda(t)\| = 0
    \end{align*}
    thus
    \begin{align*}
        \lim_{t\to+\infty}\KKTgap(x(t),\olambda(t)) = 0, \quad \lim_{t\to+\infty}\KKTgap(x(t),\lambda(t)) = 0.
    \end{align*}
\end{proof}
\subsection{Global Convergence for Strongly Convex Settings}
Under the assumption that the optimization problem \eqref{eq:optimization-eq-contraint} is strongly convex, we are able to establish the result that FL-proximal converges to the global optimal solution with a linear rate. We make the following assumption.
\begin{assump}[Strong Convexity]\label{assump:strong-cvx}
The objective function $f(x)$ is strongly convex, i.e. for any $x, y\in \bR^n$,
\begin{align*}
    f(x) - f(y) \ge \nabla f(y)^\top (x-y) + \frac{\mu}{2}\|x-y\|^2.
\end{align*}

The constraint function $h(x)$ in \eqref{eq:optimization-eq-contraint} takes the form of $h(x) = Ax + b$ where $A\in \bR^{m\times n}, b\in \bR^{m}$, and $A$ is of full row rank. 
\end{assump}

\begin{theorem}
    Under Assumption \ref{assump:strong-cvx}, we have that running FL-proximal (\eqref{eq:feedback-linearization} with $T(x) = I$) will give
    $$f(x) - f(x^\star) \le e^{-2\mu t}(f(x(0)) - f(x^\star)),$$
    where $x^\star$ is the global optimal solution of \eqref{eq:optimization-eq-contraint}.
    \begin{proof}
        We denote $P_A := A^\top (AA^\top)^{-1}A, P_A^\perp = I - P_A$. Then the key argument is that according to strong convexity we have:
\begin{align*}
f(x) - f(x^\star)&\le \nabla f(x)^\top (x-x^\star) - \frac{\mu}{2}||x-x^\star||^2 \\
&\le \nabla f(x)^\top P_A^\perp (x-x^\star) -\frac{\mu}{2}||P_A^\perp(x-x^\star)||^2 + \nabla f(x)^\top P_A^\perp (x-x^\star) \\
&\le \frac{1}{2\mu}\nabla f(x) P_A^\perp \nabla f(x) + \nabla f(x) P_A (x-x^\star).
\end{align*}
Further,
\begin{align*}
    \frac{df}{dt} &= -\nabla f(x)^\top (\nabla f(x) + J_h(x)^\top\lambda)\\
    &= -\nabla f(x)^\top (\nabla f(x) - A^\top (A A^\top)^{-1} (A\nabla f(x) - (Ax+b)))\qquad \textup{(} J_h(x) = A \textup{)}\\
    &= -\nabla f(x)^\top P_A^\perp \nabla f(x) - \nabla f(x)^\top A^\top(A A^\top)^{-1}A^\top (Ax + b)\\
    &= -\nabla f(x)^\top P_A^\perp \nabla f(x) - \nabla f(x)^\top A^\top(A A^\top)^{-1}A^\top (Ax -Ax^\star)\\
    &= -(\nabla f(x) P_A^\perp \nabla f(x) + \nabla f(x) P_A (x-x^\star)),
\end{align*}
and thus we have
\begin{align*}
    \frac{d(f(x(t)) - f(x^\star))}{dt} \le  -2\mu (f(x(t)) - f(x^\star)),
\end{align*}
which completes the proof
    \end{proof}
\end{theorem}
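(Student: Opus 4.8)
The plan is to specialize the FL-proximal dynamics to this structured setting and reduce it, on the feasible subspace, to a gradient flow for which a Polyak--\L ojasiewicz-type inequality delivers linear decay. Putting $T(x) = I$ and $h(x) = Ax + b$ (so $J_h(x)\equiv A$) and taking $K$ a positive multiple of the identity --- consistent with the choice $K = \tfrac{1}{\eta} I$ used in Theorem~\ref{theorem:FL-SQP-eq} --- the multiplier in \eqref{eq:feedback-linearization} becomes $\lambda = -(AA^\top)^{-1}(A\nabla f(x) - K(Ax+b))$. Introducing the orthogonal projector $P_A := A^\top(AA^\top)^{-1}A$ onto $\mathrm{row}(A)$ and $P_A^\perp := I - P_A$ onto $\ker A$, the primal dynamics collapses to $\dot x = -P_A^\perp\nabla f(x) - A^\top(AA^\top)^{-1}K(Ax+b)$. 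Since $AP_A^\perp = 0$, left-multiplying by $A$ gives $\tfrac{d}{dt}(Ax+b) = -K(Ax+b)$, so the constraint residual contracts exponentially; in particular, if $x(0)$ is feasible then $x(t)$ stays feasible and the dynamics is exactly $\dot x = -P_A^\perp\nabla f(x)$, i.e.\ gradient flow of $f$ along the feasible affine subspace.

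I would then track the objective: $\dot f = \nabla f(x)^\top\dot x = -\|P_A^\perp\nabla f(x)\|^2 - \nabla f(x)^\top A^\top(AA^\top)^{-1}K(Ax+b)$, which on the feasible subspace is simply $\dot f = -\|P_A^\perp\nabla f(x)\|^2$. The key lemma is a subspace-restricted Polyak--\L ojasiewicz bound: for every feasible $x$,
\[
f(x) - f(x^\star) \le \frac{1}{2\mu}\,\|P_A^\perp\nabla f(x)\|^2 .
\]
To prove it, apply $\mu$-strong convexity between $x$ and the constrained minimizer $x^\star$: $f(x) - f(x^\star) \le \nabla f(x)^\top(x-x^\star) - \tfrac{\mu}{2}\|x-x^\star\|^2$; since both points are feasible, $x-x^\star\in\ker A$, hence $\nabla f(x)^\top(x-x^\star) = (P_A^\perp\nabla f(x))^\top(x-x^\star)$ and $\|x-x^\star\| = \|P_A^\perp(x-x^\star)\|$; then the elementary inequality $v^\top w - \tfrac{\mu}{2}\|w\|^2 \le \tfrac{1}{2\mu}\|v\|^2$ with $v = P_A^\perp\nabla f(x)$, $w = x - x^\star$ closes it. Combining with the expression for $\dot f$ gives $\dot f \le -2\mu\,(f(x)-f(x^\star))$, and Gr\"onwall's inequality yields $f(x(t)) - f(x^\star) \le e^{-2\mu t}(f(x(0)) - f(x^\star))$.

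The main obstacle is the cross term $\nabla f(x)^\top A^\top(AA^\top)^{-1}K(Ax+b)$, which equals (a multiple of) $\nabla f(x)^\top P_A(x-x^\star)$: it has indeterminate sign and cannot be absorbed into $-2\mu(f-f^\star)$ using only the projected-gradient control, so the clean rate genuinely requires feasibility of $x(0)$ --- in which case $P_A(x-x^\star)\equiv 0$ and the term vanishes identically. For infeasible starts I would instead use $P_A(x(t)-x^\star) = A^\top(AA^\top)^{-1}e^{-Kt}(Ax(0)+b)$, so the offending term decays exponentially; feeding this into a Gr\"onwall estimate (together with a bound on $\|\nabla f\|$ along the trajectory) still gives exponential convergence, albeit at rate $\min\{2\mu,\lambda_{\min}(K)\}$ up to constants rather than exactly $e^{-2\mu t}$. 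A minor, routine point is well-posedness: $A$ and $(AA^\top)^{-1}$ are constant here, so the vector field is Lipschitz, the trajectory exists and is unique, and differentiating $f(x(t))$ and applying Gr\"onwall are legitimate.
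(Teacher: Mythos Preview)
Your argument uses the same ingredients as the paper's: the projectors $P_A = A^\top(AA^\top)^{-1}A$ and $P_A^\perp$, the strong-convexity-based PL bound $f(x)-f(x^\star)\le\tfrac{1}{2\mu}\|P_A^\perp\nabla f(x)\|^2$ (plus a cross term), the identity $\dot f = -\|P_A^\perp\nabla f(x)\|^2 - \nabla f(x)^\top P_A(x-x^\star)$, and Gr\"onwall.

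The only substantive difference is how the cross term $\nabla f(x)^\top P_A(x-x^\star)$ is handled. The paper retains it on both sides and jumps directly to $\dot f\le -2\mu(f-f^\star)$, whereas you restrict to feasible $x(0)$ so that $P_A(x-x^\star)\equiv 0$ and the cross term vanishes identically. Your caution is warranted: writing $A_1=\|P_A^\perp\nabla f(x)\|^2$ and $A_2=\nabla f(x)^\top P_A(x-x^\star)$, the paper has $f-f^\star\le\tfrac{1}{2\mu}A_1+A_2$ and $\dot f=-(A_1+A_2)$, but $2\mu(f-f^\star)\le A_1+2\mu A_2$ does not imply $2\mu(f-f^\star)\le A_1+A_2$ unless $A_2=0$ (the coefficients $2\mu$ and $1$ on $A_2$ do not match). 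So the clean $e^{-2\mu t}$ rate genuinely needs feasibility, exactly as you flagged; your proposed fallback for infeasible starts (rate $\min\{2\mu,\lambda_{\min}(K)\}$ via the exponential decay of $Ax(t)+b$) is the correct refinement.
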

\section{Proof of Theorem \ref{theorem:FL-SQP-eq} and \ref{theorem:FL-SQP-ineq}}\label{apdx:relationship}
\begin{proof}[Proof of Theorem \ref{theorem:FL-SQP-eq}]
We write out the lagrangian for the SQP problem \eqref{eq:local-linearization-opt}
\begin{align*}
    L(x, \lambda) = \nabla f(x_t)^\top (x - x_t) +\frac{1}{2\eta} \!(x\!-\!x_t)^{\!\top}\! T(x_t)^{-1}(x\!-\!x_t)
    + \lambda^\top (h(x_t) + J_h(x_t) (x-x_t)).
\end{align*}
Since the optimization problem \eqref{eq:local-linearization-opt} is convex and satisfies the relaxed Slater condition, the KKT condition is necessary and sufficient for global optimality. The KKT condition implies that
\begin{align*}
    &\partial_x L(x,\lambda) = 0~~
    \Longrightarrow~ \nabla f(x_t) \!+\! \frac{1}{\eta}T(x_t)^{-1} (x\!-\!x_t) \!+\! J_h(x_t)^{\!\top} \lambda \!=\! 0\\
    &\Longrightarrow~ x - x_t = -T(x_t)\eta (\nabla f(x_t) + J_h(x_t)^\top \lambda).
\end{align*}
Since $h(x_t) + J_h(x_t) (x-x_t) = 0$, we have
\begin{align*}
    &h(x_t) + J_h(x_t) (x-x_t) = h(x_t) -\eta J_h(x_t)T(x_t)(\nabla f(x_t) + J_h(x_t)^\top \lambda) = 0\\
    \Longrightarrow~~ &\left(J_h(x_t) T(x_t) J_h(x_t)^\top \right)\lambda = \frac{1}{\eta} h(x_t) -  J_h(x_t) T(x_t) \nabla f(x_t)\\
    \Longrightarrow~~ &\lambda = -\left(J_h(x_t) T(x_t) J_h(x_t)^\top \right)^{-1}(J_h(x_t) T(x_t)\nabla f(x_t) - \frac{1}{\eta} h(x_t)).
\end{align*}
Thus we have that
\begin{align*}
    &x_{t+1} = x_t - \eta T(x_t)\left( \nabla f(x_t) - J_h(x_t)^{\!\top}\!\left(J_h(x_t)T(x_t) J_h(x_t)^{\!\top} \!\right)^{\!-1}(J_h(x_t) T(x_t) \nabla f(x_t) \!-\! \frac{1}{\eta} h(x_t)) \right)\!.
\end{align*}
This is exatcly \eqref{eq:feedback-linearization-forward-Euler} with $K= \frac{1}{\eta}$.
\end{proof}

\begin{proof}[Proof of Theorem \ref{theorem:FL-SQP-ineq}]
We write out the Lagrangian for the convex optimization problem \eqref{eq:ineq-local-linearization-opt-mD}
\begin{align*}
    L(x, \lambda) = \nabla f(x_t)^\top (x - x_t) \!+\! \frac{1}{2\eta} \!(x\!-\!x_t)^{\!\top}\! T(x_t)^{-1}(x\!-\!x_t)\\
    + \lambda (h(x_t) + J_h(x_t) (x-x_t)).
\end{align*}
Since \eqref{eq:ineq-local-linearization-opt-mD} is feasible, it satisfies the relaxed Slater condition, thus we have that strong duality holds for \eqref{eq:ineq-local-linearization-opt-mD}. Also note that the optimization problem in \eqref{eq:ineq-forward-Euler-mD}
\begin{align*}
    \lambda^{\!\star}  &:= \argmin_{\lambda \ge 0}\! \left(\frac{1}{2}\lambda^{\!\top}J_h(x_t) T(x_t) J_h(x_t)^\top\lambda\right. \\
    &\qquad + \lambda^{\top} \left(J_h(x_t)T(x_t)\nabla f(x_t) \!-\! Kh(x_t)\right)\Big)
\end{align*}

is the dual problem of \eqref{eq:ineq-local-linearization-opt-mD} (see Lemma \ref{lemma:dual-problem} in Appendix \ref{apdx:auxiliaries}). Thus the solution of $\lambda^\star$ in \eqref{eq:ineq-forward-Euler-mD} serves as the Lagrangian dual variable of \eqref{eq:local-linearization-opt}. Then, we have that the $x^\star$ is an optimal solution of \eqref{eq:ineq-local-linearization-opt-mD} if and only if
\begin{align*}
    x^\star &= \argmin_x L(x, \lambda^\star)
~~\Longleftrightarrow~~  \partial_x L(x,\lambda^\star) = 0\\
&\Longleftrightarrow~ \nabla f(x_t) \!+\! \frac{1}{\eta} T(x_t)^{-1} (x^\star-x_t) \!+\! J_h(x_t)^\top \lambda^\star  \!=\! 0\\
    &\Longleftrightarrow~ x^\star - x_t = -\eta T(x_t) (\nabla f(x_t) + J_h(x_t)^\top \lambda^\star),
\end{align*}
thus we have that both \eqref{eq:ineq-forward-Euler-mD} and \eqref{eq:ineq-local-linearization-opt-mD} follow the update:
\begin{equation*}
     x_{t+1} - x_t = -\eta T(x_t) (\nabla f(x_t) + J_h(x_t)^\top \lambda^\star),
\end{equation*}
which completes the proof.
\end{proof}

\section{Proof of Theorem \ref{thm:convergence-ineq-mD}} \label{apdx:ineq}
Before proving Theorem \ref{thm:convergence-ineq-mD}, we first introduce the following lemma which plays an important role in the proof. 
\begin{lemma}\label{lemma:KKT}
    Solving $\lambda$ in \eqref{eq:feedback-linearization-ineq-md} is equivalent to solving the following equations:
    \begin{equation}\label{eq:KKT}
    \begin{split}
         &\JTJ\lambda + \JT\nabla f(x) = Ky + s,\\
        &s^\top \lambda = 0\\
        &s\ge 0, ~\lambda\ge0
    \end{split}
    \end{equation}
    \begin{proof}
    The proof is simply writing out the KKT condition of the optimization problem.
\end{proof}
\end{lemma}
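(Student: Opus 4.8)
The statement to prove is Lemma~\ref{lemma:KKT}: that solving $\lambda$ in the variational problem \eqref{eq:lambda-ineq} is equivalent to the complementarity system \eqref{eq:KKT}. The plan is to simply write down the first-order optimality (KKT) conditions of the convex quadratic program defining $\lambda$ and identify the slack variable $s$ with the gradient residual. Concretely, the objective $q(\lambda) = \tfrac12\lambda^\top \JTJ \lambda + \lambda^\top(\JT\nabla f(x) - Kh(x))$ is convex (since $\JTJ \succeq 0$), and the feasible set $\{\lambda \ge 0\}$ is polyhedral, so the KKT conditions are both necessary and sufficient for global optimality; no constraint qualification subtlety arises because the constraints are linear.

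First I would introduce the multiplier $s \ge 0$ for the constraint $-\lambda \le 0$ and form the Lagrangian $\mathcal{L}(\lambda, s) = q(\lambda) - s^\top \lambda$. Stationarity $\nabla_\lambda \mathcal{L} = 0$ gives $\JTJ\lambda + \JT\nabla f(x) - Kh(x) - s = 0$, i.e.\ $\JTJ\lambda + \JT\nabla f(x) = Ky + s$ using $y = h(x)$; primal feasibility gives $\lambda \ge 0$; dual feasibility gives $s \ge 0$; and complementary slackness gives $s^\top\lambda = 0$. This is exactly the system \eqref{eq:KKT}. For the converse direction, any $(\lambda, s)$ satisfying \eqref{eq:KKT} satisfies the KKT conditions of the convex program, hence $\lambda$ is a global minimizer. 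The only mild point worth a sentence is that \eqref{eq:KKT} determines $s$ uniquely given $\lambda$ (namely $s = \JTJ\lambda + \JT\nabla f(x) - Ky$), so "solving for $\lambda$" in \eqref{eq:lambda-ineq} is genuinely equivalent to "solving the system" \eqref{eq:KKT}.

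There is essentially no obstacle here — the lemma is a routine restatement of convex-QP optimality, and the author's one-line proof ("simply writing out the KKT condition") confirms this. The only thing to be careful about is bookkeeping: matching the paper's notation $y = h(x)$, keeping signs consistent between the minimization form of \eqref{eq:lambda-ineq} and the stationarity equation, and noting convexity of the quadratic so that KKT $\Leftrightarrow$ global optimality rather than merely $\Rightarrow$. Since the problem is assumed to admit a finite (bounded) solution by Assumption~\ref{assump:convergence-ineq}, existence is not in question, so the equivalence is clean.
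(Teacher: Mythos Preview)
Your proposal is correct and follows exactly the approach the paper indicates: writing out the KKT conditions of the convex quadratic program \eqref{eq:lambda-ineq} with multiplier $s$ for the constraint $\lambda\ge 0$, which immediately yields the system \eqref{eq:KKT}. The additional remarks you make about convexity ensuring equivalence (rather than just necessity) and about $s$ being uniquely determined by $\lambda$ are accurate and simply flesh out the paper's one-line proof.
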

We are now ready to prove Theorem \ref{thm:convergence-ineq-mD}
\begin{proof}[Proof of Theorem \ref{thm:convergence-ineq-mD}]
    We first prove statement 1.
    \begin{align*}
        \dot y &= J_h(x) \dot x = -\JT\nabla f(x) - \JTJ \lambda\\
        &= -Ky-s~~\textup{(Lemma \ref{lemma:KKT})}
     \end{align*}
     thus
     \begin{align*}
         \dot y_i = -k_i y_i - s \le -k_iy_i
     \end{align*}
     and hence we have proven statement 1.

     Now we prove statement 2.
     \begin{align*}
         \dot f(x) &= \nabla f(x)^\top \dot x = -\nabla f(x)^\top T(x) \nabla f(x) - \lambda^\top J_h(x) T(x) \nabla f(x)\\
         &=  -\nabla f(x)^\top T(x) \nabla f(x) - \lambda^\top J_h(x) T(x) \nabla f(x) - \lambda^\top Ky + \lambda^\top Ky\\
         & = -\nabla f(x)^\top T(x) \nabla f(x) - \lambda^\top J_h(x) T(x) \nabla f(x) - \lambda^\top \JTJ \lambda -\lambda^\top \JT\nabla f(x)+ \lambda^\top Ky\\
         &= - \|\nabla f(x) + J_h(x)^\top \lambda \|_{T(x)}^2 + \lambda^\top Ky\\
         & = -\|\nabla f(x) + J_h(x)^\top \lambda \|_{T(x)}^2 + \lambda_{\cI^c}^\top (Ky)_{\cI^c} + \lambda_{\cI}^\top (Ky)_{\cI}\\
         & \le -\|\nabla f(x) + J_h(x)^\top \lambda \|_{T(x)}^2 + \lambda_{\cI^c}^\top (Ky)_{\cI^c}  + \|\lambda\|_\infty \sum_{i\in\cI}k_i y_i\\
         &\le -\|\nabla f(x) + J_h(x)^\top \lambda \|_{T(x)}^2 + \lambda_{\cI^c}^\top (Ky)_{\cI^c} + L\sum_{i\in\cI}k_i y_i
     \end{align*}
Here we abbreviate $\cI(x) = \{i| h_i(x)>0, 1\le i\le m\}$ as $\cI$. From the proof of Statement 1 we have that for $i\in \cI$
\begin{align*}
    \dot h_i(x) &= -k_iy_i - s_i \le -k_iy_i
\end{align*}
Thus, combining the inequalities together, we have that
\begin{align}
    &\frac{d\left(f(x) + L\sum_{i\in\cI} h_i(x)\right)}{dt} \le -\|\nabla f(x) + J_h(x)^\top \lambda \|_{T(x)}^2 + \lambda_{\cI^c}^\top (Ky)_{\cI^c} + L\sum_{i\in\cI}k_i y_i - L\sum_{i\in\cI}k_i y_i \notag\\
    &\le -\|\nabla f(x) + J_h(x)^\top \lambda \|_{T(x)}^2 + \lambda_{\cI^c}^\top (Ky)_{\cI^c} \le 0 \quad \textup{(} y_i \le 0 \textup{ for } i\in\cI^c \textup{)} \label{eq:ineq-convergence-proof-1}.
\end{align}
Further, we have that the function on the righthand side is equivalent to
\begin{align*}
    f(x) + L\sum_{i\in\cI} h_i(x) = f(x) + L\sum_{i}[h_i(x)]_+ = \ell(x).
\end{align*}
We would also like to note that given $f(x)$ and $h(x)$ are differentiable and Lipschitz, we have that $\ell(x)$ is an absolute continuous function (cf. \cite{royden2010real}) and almost everywhere differentiable. Hence, given that $\frac{d\ell(x(t))}{dt} \le 0$ holds almost everywhere, we have that $\ell(x(t))$ is non-increasing with respect to $t$ which completes the proof.

We now prove Statement 3. From \eqref{eq:ineq-convergence-proof-1} we have that
\begin{align*}
    &\dot \ell(x) \le -\|\nabla f(x) + J_h(x)^\top \lambda \|_{T(x)}^2 + \lambda_{\cI^c}^\top (Ky)_{\cI^c}\\
    \Longrightarrow &\int_{t=0}^T \left(\|\nabla f(x(t)) + J_h(x(t))\lambda(t)\|_{T(x)}^2 - \sum_{i\in \cI(x)^c} k_i\lambda_i(t)h_i(x(t))\right)dt \le \ell(x(0)) - \ell(x(T)),
\end{align*}
which completes the proof. Note that here we leverage the fact that $\ell(x)$ is absolute continuous. 

We now prove Statement 4. From Statement 3 we have that
\begin{align*}
    \inf_{\frac{T}{2}\le t \le T}\left(\|\nabla f(x(t)) + J_h(x(t))\lambda(t)\|_{T(x)}^2 - \sum_{i\in \cI(x)^c} k_i\lambda_i(t)h_i(x(t))\right) \le \frac{2}{T}\left(\ell(x(0)) - \ell(x(T))\right)\\
    \le \frac{2}{T}\left(f(x(0)) - f_{\min} + L\sum_{i\in\cI(x(0))}h_i(x(0))\right)
\end{align*}
Thus we have that there exists a $\frac{T}{2}\le t^\star\le T$ such that
\begin{align*}
    \|\nabla f(x(t^\star)) + J_h(x(t^\star))\lambda(t^\star)\|_{T(x)}^2 - \sum_{i\in \cI(x(t^\star))^c} k_i\lambda_i(t^\star)h_i(x(t^\star))\le \frac{2}{T}\left(f(x(0)) - f_{\min} + L\sum_{i\in\cI(x(0))}h_i(x(0))\right)
\end{align*}
Thus
\begin{align*}
    \|\nabla f(x(t^\star)) + J_h(x(t^\star))\lambda(t^\star)\|\le \sqrt{\frac{2}{\lmin T}\left(f(x(0)) - f_{\min} + L\sum_{i\in\cI(x(0))}h_i(x(0))\right)},\\
    - \sum_{i\in \cI(x(t^\star))^c} k_i\lambda_i(t^\star)h_i(x(t^\star))\le \frac{2}{T}\left(f(x(0)) - f_{\min} + L\sum_{i\in\cI(x(0))}h_i(x(0))\right).
\end{align*}
Since $t^\star \ge \frac{T}{2}$ we have that
\begin{align*}
    &h_i(x(t^\star)) \le h_i(x(0)) e^{-\frac{k_iT}{2}},\\
    &\sum_{i\in\cI(x(t^\star))} \lambda_i(t^\star) h_i(x(t^\star))\le Le^{-\frac{\min_i k_iT}{2}}\sum_{i\in\cI(x(0))}h_i(x(0)).
\end{align*}
Thus we have that
\begin{align*}
    &\quad |\lambda(t^\star)^\top h(x(t^\star))| = -\sum_{i\in \cI(x(t^\star))^c} \lambda_i(t^\star)h_i(x(t^\star)) + \sum_{i\in\cI(x(t^\star))} \lambda_i(t^\star) h_i(x(t^\star))\\
    &\le \frac{1}{\min_i k_i}\frac{2}{T}\left(f(x(0)) - f_{\min} + L\sum_{i\in\cI(x(0))}h_i(x(0))\right) + Le^{-\frac{\min_i k_iT}{2}}\sum_{i\in\cI(x(0))}h_i(x(0))\\
    & \le \frac{1}{\min_i k_i}\frac{2}{T}\left(f(x(0)) - f_{\min} + (L+1)\sum_{i\in\cI(x(0))}h_i(x(0))\right)
\end{align*}
Thus we have that
\begin{align*}
    &\quad \inf_{0\le t\le T} \KKTgap(x(t),\lambda(t)) \le \KKTgap(x(t^\star),\lambda(t^\star))\\
    & = \max\left\{\|\nabla f(x(t^\star)) + J_h(x(t^\star))^\top \lambda(t^\star)\|, \left|\lambda(t^\star)^\top h(x(t^\star))\right|, \max_i [h_i(x(t^\star))]_+\right\}\\
    &\le \max\left\{\sqrt{\frac{2}{\lmin T}\left(f(x(0)) - f_{\min} + L\sum_{i\in\cI(x(0))}h_i(x(0))\right)}, \right.\\
    &\qquad\qquad\qquad\left.\frac{1}{\min_i k_i}\frac{2}{T}\left(f(x(0)) - f_{\min} + (L+1)\sum_{i\in\cI(x(0))}h_i(x(0))\right) \right\}
\end{align*}

Further, from statement 3 we get that
\begin{align*}
    \lim_{t\to+\infty} \|\nabla f(x(t)) + J_h(x(t))\lambda(t)\| = 0,\\
    \lim_{t\to +\infty}- \sum_{i\in \cI(x)^c} k_i\lambda_i(t)h_i(x(t)) = 0
\end{align*}
From statement 1 we get that
\begin{align*}
    \lim_{t\to +\infty} [h_i(x(t))]_+ = 0\\
    \lim_{t\to +\infty} \sum_{i\in \cI(x)} \lambda_i(t)h_i(x(t)) = 0.
\end{align*}
Additionally,
\begin{align*}
    \lim_{t\to +\infty}|\lambda(t)h(x(t))| \le -\frac{1}{\min_i k_i}\lim_{t\to+\infty}\sum_{i\in \cI(x)^c} k_i\lambda_i(t)h_i(x(t)) + \lim_{t\to +\infty} \sum_{i\in \cI(x)} \lambda_i(t)h_i(x(t)) = 0,
\end{align*}
thus we have that
\begin{align*}
        \lim_{t\to+\infty} \KKTgap(x(t), \lambda(t)) = 0,
\end{align*}
which completes the proof.
\end{proof}

\section{Proof of Theorem \ref{theorem:convergence-momentum}}\label{apdx:acceleration}

We first define the following notations: denote $P(x):= I - J_h(x)^\top (\JJ)^{-1} J_h(x)$. Note that we have $P(x) = P(x)^2$.

\begin{proof}[Proof of Theorem \ref{theorem:convergence-momentum}]
From \eqref{eq:momentum-FL} we have that
\begin{align}
\frac{d f(x)}{dt} &= \nabla f(x)^\top z\label{eq:df}\\
\frac{d^2 f(x)}{dt^2}&= \frac{d(\nabla f(x)^\top z)}{dt} \notag\\
&= z^\top \nabla^2 f(x) z + \nabla f(x)^\top (-\alpha z - \nabla f(x) - J_h(x)^\top \lambda)\notag\\
&= z^\top \nabla^2 f(x) z \!-\!\alpha \nabla f(x)^\top z\!-\! \nabla f(x)^\top P(x)f(x) \!-\! \nabla f(x)^\top   J_h(x)^\top (J_h(x)J_h(x)^\top)^{-1} Kh(x)\notag\\
& = z^\top \nabla^2 f(x) z \!-\!\alpha \nabla f(x)^\top z\!-\! \nabla f(x)^\top P(x)f(x) +\bar\lambda(x)^\top Kh(x) 
\label{eq:d2f}
\end{align}
Thus by combining $\alpha \times \textup{\eqref{eq:df}} + \textup{\eqref{eq:d2f}}$ we have
\begin{align}
\frac{d}{dt}\left(\alpha f(x) + \nabla f(x)^\top z\right) = z^\top \nabla^2 f(x) z-\nabla f(x)^\top P(x)f(x) +\bar\lambda(x)^\top Kh(x)
\label{eq:df-d2f}
\end{align}
Similarly we have
\begin{align}
&\frac{d\frac{1}{2}\|h(x)\|^2}{dt} = h(x)^\top J_h(x) z \label{eq:dh2}\\
\frac{d^2 \frac{1}{2}\|h(x)\|^2}{dt^2} &= \frac{h(x)^\top J_h(x)z}{dt}\notag\\
&=z^\top J_h(x)^\top J_h(x)z + h(x)^\top J_h(x) (-\alpha z - \nabla f(x) - J_h(x)^\top \lambda) + \sum_{i=1}^m z_i h(x)^\top \nabla^2 h_i(x) z  \notag\\
&= z^\top J_h(x)^\top J_h(x)z - \alpha h(x)^\top J_h(x) z -h(x)^\top J_h(x)(\nabla f(x) + J_h(x)^\top\lambda) + \sum_{i=1}^m z_i h(x)^\top \nabla^2 h_i(x) z \notag\\
\end{align}
By the definition of $\lambda$ we know that
\begin{align*}
    J_h(x)(\nabla f(x) +J_h(x)^\top \lambda) = Kh(x),
\end{align*}
thus we have
\begin{align}
\frac{h(x)^\top J_h(x)z}{dt}&=z^\top J_h(x)^\top J_h(x)z - \alpha h(x)^\top J_h(x) z - h(x)^\top K h(x) + \sum_{i=1}^m z_i h(x)^\top \nabla^2 h_i(x) z \label{eq:d2h2}
\end{align}
Let $\bar H(x):= [h(x)^\top \nabla^2 h_i(x)]_{i=1}^n$, then $\sum_{i=1}^m z_i h(x)^\top \nabla^2 h_i(x) z  = z^\top \bar H(x)z$. Thus by combining $\alpha \times \textup{\eqref{eq:dh2}} + \textup{\eqref{eq:d2h2}}$ we have
\begin{align}
   \frac{d}{dt}\left(\frac{\alpha}{2}\|h(x)\|^2 + h(x)^\top J_h(x)z \right) = z^\top J_h(x)^\top J_h(x)z - h(x)^\top K h(x) + z^\top \bar H(x) z \label{eq:dh2-d2h2}
\end{align}

Another set of ODE we will use is
\begin{align}
&\frac{d\bar\lambda(x)^\top h(x)}{dt} = \bar\lambda(x)^\top J_h(x)z + h(x)^\top\frac{\partial \bar\lambda(x)}{\partial x} z\label{eq:d lambda h}\\
&\frac{d }{dt}\left(\bar\lambda(x)^\top J_h(x)z + h(x)^\top\frac{\partial \bar\lambda(x)}{\partial x} z\right)\notag\\
&= z^\top \left(\frac{\partial\left(J_h(x)^\top \lambda(x)\right)}{\partial x}\right)^\top  z + \bar\lambda(x)^\top J_h(x)(-\alpha z - \nabla f(x) - J_h(x)^\top\lambda)\notag\\
& + z^\top \left(\frac{\partial\left(h(x)^\top\frac{\partial \bar\lambda(x)}{\partial x} \right)}{\partial x}\right)^\top  z + h(x)^\top\frac{\partial \bar\lambda(x)}{\partial x}(-\alpha z - \nabla f(x) - J_h(x)^\top\lambda)\notag\\
& = z^\top \left(\frac{\partial\left(J_h(x)^\top \lambda(x) + \left(\frac{\partial \bar\lambda(x)}{\partial x} ^\top h(x)\right)\right)}{\partial x}\right)  z -\alpha \bar\lambda(x)^\top J_h(x)z - \bar\lambda(x)^\top Kh(x)\notag\\
& -\alpha h(x)^\top\frac{\partial \bar\lambda(x)}{\partial x}z - h(x)^\top\frac{\partial \bar\lambda(x)}{\partial x}  P(x)\nabla f(x) - h(x)^\top\frac{\partial \bar\lambda(x)}{\partial x} J_h(x)^\top(\JJ)^{-1}Kh(x)
\label{eq:d2 lambda h}
\end{align}
Thus by combining $\alpha\times \textup{\eqref{eq:d lambda h}} + \textup{\eqref{eq:d2 lambda h}}$ we get
\begin{align}
 & \frac{d}{dt}\left(\alpha \bar\lambda(x)^\top h(x) + \bar\lambda(x)^\top J_h(x)z + h(x)^\top\frac{\partial \bar\lambda(x)}{\partial x} z\right)\notag\\
 & =  z^\top \left(\frac{\partial\left(J_h(x)^\top \lambda(x) + \left(\frac{\partial \bar\lambda(x)}{\partial x} ^\top h(x)\right)\right)}{\partial x}\right)  z - \bar\lambda(x)^\top Kh(x)  \notag\\
 &- h(x)^\top\frac{\partial \bar\lambda(x)}{\partial x}  P(x) \nabla f(x) - h(x)^\top\frac{\partial \bar\lambda(x)}{\partial x} J_h(x)^\top(\JJ)^{-1}Kh(x)\label{eq:d-d2-lambda-h}
\end{align}
Further we also have
\begin{align}
    \frac{d \frac{1}{2}\|z\|^2 }{dt^2} &= z^\top (-\alpha z - P(x)\nabla f(x) - J_h(x)^\top (J_h(x)J_h(x)^\top)^{-1} Kh(x))\notag\\
    &= -\alpha \|z\|^2 - z^\top P(x) \nabla f(x) - z^\top J_h(x)^\top (J_h(x)J_h(x)^\top)^{-1} Kh(x)\label{eq:dz2}
\end{align}

Finally, we combine $a_1 \times \textup{\eqref{eq:df-d2f}} + a_1\times \textup{\eqref{eq:d-d2-lambda-h}}+ a_2 \textup{\eqref{eq:dh2-d2h2}} + \textup{\eqref{eq:dz2}}$ we get

\begin{align}
   &  \frac{d}{dt}\underbrace{\!\!\left(\!a_1 \alpha f(x) \!+\! \frac{a_2 \alpha}{2}\|h(x)\|^2 \!+\!  \left(\!a_1\nabla f(x) \!+\! a_2 J_h(x)^\top h(x) \!+\! a_1J_h(x)^\top\bar\lambda(x)\!+\! a_1\frac{\partial\bar\lambda(x)}{\partial x}^\top h(x) \right)^\top z \!+\! a_1\alpha \bar\lambda (x)^\top h(x)\! +\! \|z\|^2  \!\right)}_{\ell(x,z)} \notag\\
    & = z^\top\left(a_1\nabla^2 f(x) + a_1 \left(\frac{\partial\left(J_h(x)^\top \lambda(x) + \left(\frac{\partial \bar\lambda(x)}{\partial x} ^\top h(x)\right)\right)}{\partial x}\right) + a_2 J_h(x)^\top J_h(x) + a_2 \bar H(x)\right) z -\alpha \|z\|^2 \notag\\
    &\qquad\left.
    \begin{array}{l}
        - a_1 h(x)^\top\frac{\partial \bar\lambda(x)}{\partial x}  P(x) \nabla f(x) -a_1 h(x)^\top\frac{\partial \bar\lambda(x)}{\partial x} J_h(x)^\top(\JJ)^{-1}Kh(x)\\ 
         - z^\top P(x) \nabla f(x) - z^\top J_h(x)^\top (J_h(x)J_h(x)^\top)^{-1} Kh(x)\\
         - a_1 \nabla f(x)^\top P(x)f(x) - a_2 h(x)^\top K h(x)
    \end{array}
    \right\}_{\textup{Part I}}.
    \label{eq:lyapunov equation}
\end{align}
Note that for $a_2 \ge 4a_1\times \frac{\lmax(K)}{\lmin(K)} \left\|\frac{\partial \bar\lambda(x)}{\partial x}\right\| \sqrt{\left\|(\JJ)^{-1}\right\|} + \frac{a_1}{\lmin(K)}\left\|\frac{\partial \bar\lambda(x)}{\partial x}\right\|^2$, we have that
\begin{align*}
    -a_2 h(x)^\top K h(x) - a_1 h(x)^\top\frac{\partial \bar\lambda(x)}{\partial x} J_h(x)^\top(\JJ)^{-1}Kh(x)  &\le -\frac{3}{4}a_2 h(x)^\top K h(x)\\
    -\frac{a_1}{2} \nabla f(x)^\top P(x)f(x) - a_1h(x)^\top\frac{\partial \bar\lambda(x)}{\partial x}  P(x) \nabla f(x)&\le \frac{a_1}{2} \left\|\frac{\partial \bar\lambda(x)}{\partial x}\right\|^2 \frac{1}{\lmin(K)} h(x)^\top K h(x)\\
    &\le \frac{1}{2} a_2 h(x)^\top K h(x)
\end{align*}

Thus substituting the above equation into Part I of \eqref{eq:lyapunov equation} we get
\begin{align*}
    \textup{Part I} &\le -\frac{a_2}{4} h(x)^\top K h(x) - \frac{a_1}{2}\nabla f(x)^\top P(x)f(x) - z^\top P(x) \nabla f(x) - z^\top J_h(x)^\top (J_h(x)J_h(x)^\top)^{-1} Kh(x)\\
    & \le \frac{1}{a_1}\|z\|^2 + \frac{2}{a_2}\|K\|\left\|(\JJ)^{-1}\right\| \|z\|^2 - \frac{a_2}{8} h(x)^\top K h(x) - \frac{a_1}{4}\|P(x)\nabla f(x)\|^2
\end{align*}

Then, substitute the above inequality to \eqref{eq:lyapunov equation} and by setting 
\begin{align*}
    a_2 \ge a_1\times\left(4\frac{\lmax(K)}{\lmin(K)}L_2D + \frac{L_1^2}{\lmin(K)}\right)
\end{align*}
we get
\begin{align*}
    &\frac{d\ell(x,z)}{dt}\le  \left(a_1(L_f + L_2) + a_2 (M^2 + \bar H) + \frac{1}{a_1} + \frac{2(\lmax(K)D^2)}{a_2}\right)\|z\|^2 - \alpha \|z\|^2 \\
   &\qquad\qquad - \frac{a_2}{8} h(x)^\top K h(x) - \frac{a_1}{4}\nabla f(x)^\top P(x)f(x)
\end{align*}

Thus by setting 
\begin{align*}
    \alpha \ge  \left(a_1(L_f + L_2) + a_2 (M^2 + \bar H) + \frac{1}{a_1} + \frac{2(\lmax(K)D^2)}{a_2}\right) + 1
\end{align*}
we get
\begin{align} \label{eq:momentum-dl}
    \frac{d\ell(x,z)}{dt} \le - \|z\|^2 - \frac{a_2}{8} h(x)^\top K h(x) - \frac{a_1}{4}\|P(x)\nabla f(x)\|^2 \le 0,
\end{align}
thus $\ell(x(t),z(t))$ is non-increasing with respect to $t$, which proves Statement 1. 

For Statement 2, note that
\begin{align*}
    P(x)\nabla f(x) = \nabla f(x) + J_h(x)^\top \olambda(x),
\end{align*}
thus from \eqref{eq:momentum-dl} we have
\begin{align*}
    &\int_{t=0}^T \|z(t)\|^2 + \frac{a_2}{8} h(x(t))^\top K h(x(t)) + \frac{a_1}{4}\|P(x(t))\nabla f(x(t))\|^2 dt \le \ell(x(0), z(0)) - \ell(x(T), z(T)) \\
    \Longrightarrow &\int_{t=0}^T  \frac{a_2\lmin(K)}{8} \| h(x(t))\|^2 + \frac{a_1}{4}\|\nabla f(x(t)) + J_h(x(t))\olambda(x(t))\|^2 dt \le \ell(x(0), z(0)) - \ell_{\min}
\end{align*}
which completes the proof.

We now prove Statement 3. Note that
\begin{align*}
  \min\left\{\frac{a_2\lmin(K)}{8}, \frac{a_1}{4}\right\}  \KKTgap(x,\olambda(x))^2 \le \frac{a_2\lmin(K)}{8} \| h(x(t))\|^2 + \frac{a_1}{4}\|\nabla f(x(t)) + J_h(x(t))\olambda(x(t))\|^2,
\end{align*}
thus we have
\begin{align*}
   \int_{t=0}^T  \KKTgap(x(t),\olambda(x(t)))^2 \le \frac{\ell(x(0), z(0)) - \ell_{\min}}{\min\left\{\frac{a_2\lmin(K)}{8}, \frac{a_1}{4}\right\} }
\end{align*}
and thus gives
\begin{align*}
    \inf_{0\le t\le T} \KKTgap(x(t),\olambda(x(t))) \le \sqrt{\frac{\ell(x(0), z(0)) - \ell_{\min}}{\min\left\{\frac{a_2\lmin(K)}{8}, \frac{a_1}{4}\right\}T }} \sim O(\frac{1}{\sqrt{T}})
\end{align*}
and that
\begin{align*}
    \lim_{t\to +\infty} \KKTgap(x(t),\olambda(x(t))) =0,
\end{align*}
further, given that $\lim_{t\to+\infty} \olambda(x(t)) - \lambda(t) = 0$,
we have
\begin{align*}
    \lim_{t\to +\infty} \KKTgap(x(t),\lambda(t)) =0,
\end{align*}
which completes the proof.

\subsection{Intuition: optimization with affine constraints}\label{apdx:intuition}
The main goal of this section is to provide intuition for why FL-momentum is able to accelerate convergence compared with FL-proximal. We will focus on the setting where the constraints are affine functions and show that in this setting, the FL-proximal method corresponds to the projected gradient descent, and the FL-momentum corresponds to its momentum-accelerated version.

The problem that we consider is
\begin{equation}\label{eq:optimization-eq-contraint-linear}
\begin{split}
    &\min_x f(x)\\
    &s.t. ~~Ax + b  = 0,
\end{split}
\end{equation}
In this setting, the forward Euler discretization for FL-proximal is given by
\begin{align}
    &x_{t\!+\!1} \!\!=\! x_t \!+\! \eta \nabla \! f(x_t) \!-\! \eta A^{\!\top} \!(\!AA^{\!\top}\!)^{\!-1}\!\left(\!\!A\nabla \!f(x_t) \!-\! \frac{1}{\eta}(Ax_t \!+\! b)\!\!\right)\notag\\
    &= (I\!-\!A^\top (AA^\top)^{-1}A)(x_t\!-\!\eta\nabla f(x_t))\! -\! A^\top (AA^\top)^{-1}b \notag\\
    &= \Proj(x_t - \eta \nabla f(x_t)),
    \label{eq:SQP}
\end{align}
where $\Proj$ is the projection onto the hyperplane $Ax + b = 0$. Note that the above scheme is equivalent to projected gradient descent. Further, the forward Euler discretization for FL-momentum is given by
\begin{equation*}
\begin{split}
  w_t &= x_t + \beta(x_t - x_{t-1})\\
    \lambda_t &=  -\left(J_h(w_t) J_h(w_t)^\top \right)^{-1}(J_h(w_t) \nabla f(w_t) - \frac{1}{\eta} h(w_t))\\
    x_{t\!+\!1} &= w_t \!+\! \eta \nabla\! f(w_t) \!-\! \eta A^{\!\top} \!(\!AA^{\!\top}\!)^{-1}\!\left(\!A\nabla\! f(w_t) \!-\! \frac{1}{\eta}(Aw_t \!+\! b)\!\right)\\
    &= (I\!-\!A^\top (AA^\top)^{-1}A)(w_t\!-\!\eta\nabla\! f(w_t)) \!-\! A^\top (AA^\top)^{-1}b \\
    &= \Proj(w_t - \eta \nabla f(w_t)),
\end{split}
\end{equation*}
\vspace{-10pt}
Note that this is exactly the momentum-accelerated projected gradient descent (cf. \cite{daspremont_acceleration_2021}).

\end{proof}
\section{Auxiliaries}\label{apdx:auxiliaries}
\begin{lemma}\label{lemma:dual-problem}
    The following two problems are the dual problem for each other.
    \begin{equation}\label{eq:lemma-dual-1}
    \begin{split}
    &\min_x \nabla f(x_t)^\top (x - x_t) + \frac{1}{2\eta} (x - x_t)^\top T(x_t)^{-1}(x-x_t)\\
    &\qquad s.t.\quad  h(x_t) + J_h(x_t) (x-x_t) \le 0
    \end{split}
    \end{equation}
    \begin{equation}\label{eq:lemma-dual-2}
    \min_{\lambda \ge 0} ~~\frac{1}{2}\lambda^\top J_h(x_t) T(x_t)J_h(x_t)^\top \lambda + \lambda^\top \left(J_h(x_t)T(x_t)\nabla f(x_t) - \frac{1}{\eta}h(x_t)\right)
    \end{equation}
\begin{proof}
     We write out the lagrangian for \eqref{eq:lemma-dual-1}
    \begin{align*}
        L(x, \lambda) = \nabla f(x_t)^\top (x - x_t) + \frac{1}{2\eta} (x - x_t)^\top T(x_t)^{-1}(x-x_t) + \lambda^\top \left(h(x_t) + J_h(x_t) (x-x_t)\right)
    \end{align*}
    Note that strong duality holds for \eqref{eq:lemma-dual-1}. Thus solving \eqref{eq:lemma-dual-1} is equivalent to solving
    \begin{align*}
        \min_x \max_{\lambda\ge0} \nabla f(x_t)^\top (x - x_t) + \frac{1}{2\eta} (x - x_t)^\top T(x_t)^{-1}(x-x_t) + \lambda^\top \left(h(x_t) + J_h(x_t) (x-x_t)\right)\\
        = \max_{\lambda\ge0}\min_x \nabla f(x_t)^\top (x - x_t) + \frac{1}{2\eta} (x - x_t)^\top T(x_t)^{-1}(x-x_t) + \lambda^\top \left(h(x_t) + J_h(x_t) (x-x_t)\right)
    \end{align*}
    Note that
    \begin{align*}
        \argmin_x \nabla f(x_t)^\top (x - x_t) + \frac{1}{2} (x - x_t)^\top T(x_t)^{-1}(x-x_t) + \lambda^\top \left(h(x_t) + J_h(x_t) (x-x_t)\right) \\
        = x_t - \eta T(x_t)(\nabla f(x_t) + J_h(x_t)^\top \lambda),
    \end{align*}
    substituting this to the lagrangian we get
    \begin{align*}
       & \max_{\lambda\ge0}\min_x L(x,\lambda) \\
       &= \eta \Bigg(\max_{\lambda\ge 0} -\nabla f(x)^\top T(x_t)(\nabla f(x_t) + J_h(x_t)^\top \lambda) + \frac{1}{2}(\nabla f(x_t) + J_h(x_t)^\top \lambda)^\top T(x_t)(\nabla f(x_t) + J_h(x_t)^\top \lambda) \\
       &\qquad\qquad+ \lambda^\top \left(\frac{1}{\eta}h(x_t) + J_h(x_t)T(x_t) (\nabla f(x_t) - J_h(x_t)^\top \lambda)\right)\Bigg)\\
       & = \eta \left(\max_{\lambda\ge0}-\frac{1}{2}\lambda^\top J_h(x_t) T(x_t)J_h(x_t)^\top \lambda - \lambda^\top \left(J_h(x_t)T(x_t)\nabla f(x_t) - \frac{1}{\eta}h(x_t)\right)\right) \\
       & = -\eta \left(\min_{\lambda\ge0}\frac{1}{2}\lambda^\top J_h(x_t) T(x_t)J_h(x_t)^\top \lambda + \lambda^\top \left(J_h(x_t)T(x_t)\nabla f(x_t) - \frac{1}{\eta}h(x_t)\right)\right)
    \end{align*}
    The proof is thus completed by strong duality
\end{proof}
\end{lemma}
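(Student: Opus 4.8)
The plan is to establish Lemma \ref{lemma:dual-problem} as a standard Lagrangian duality statement for a convex quadratic program with affine inequality constraints. First I would form the Lagrangian of the primal \eqref{eq:lemma-dual-1},
\[
L(x,\lambda) = \nabla f(x_t)^\top(x-x_t) + \tfrac{1}{2\eta}(x-x_t)^\top T(x_t)^{-1}(x-x_t) + \lambda^\top\big(h(x_t)+J_h(x_t)(x-x_t)\big).
\]
Since the quadratic objective is strictly convex (because $T(x_t)^{-1}\succ 0$ follows from the standing assumption $\lmin I \preceq T(x_t) \preceq \lmax I$) and the constraints are affine, the relaxed Slater condition holds and strong duality applies, exactly as invoked in the proofs of Theorems \ref{theorem:FL-SQP-eq} and \ref{theorem:FL-SQP-ineq}. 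Thus solving \eqref{eq:lemma-dual-1} is equivalent to evaluating $\max_{\lambda\ge 0}\min_x L(x,\lambda)$, and it suffices to show the inner dual function coincides, up to an additive constant and a positive scaling, with the objective of \eqref{eq:lemma-dual-2}.

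Next I would compute the inner minimization in closed form. The map $x\mapsto L(x,\lambda)$ is an unconstrained strictly convex quadratic, so setting $\partial_x L = 0$ yields $x - x_t = -\eta\, T(x_t)\big(\nabla f(x_t)+J_h(x_t)^\top\lambda\big)$, which is precisely the minimizer recorded in the surrounding computation. Substituting this back into $L$ and expanding is the core step: I would collect the result into three groups — a $\lambda$-independent constant $-\tfrac{\eta}{2}\nabla f(x_t)^\top T(x_t)\nabla f(x_t)$, a quadratic term $-\tfrac{\eta}{2}\lambda^\top J_h(x_t)T(x_t)J_h(x_t)^\top\lambda$, and a linear term $-\eta\,\lambda^\top J_h(x_t)T(x_t)\nabla f(x_t) + \lambda^\top h(x_t)$.

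Finally, I would discard the constant group since it does not affect the $\arg\max$, and factor $-\eta$ out of the remaining terms. This converts $\max_{\lambda\ge 0}$ of the dual function into $-\eta\,\min_{\lambda\ge 0}$ of $\tfrac{1}{2}\lambda^\top J_h(x_t)T(x_t)J_h(x_t)^\top\lambda + \lambda^\top\big(J_h(x_t)T(x_t)\nabla f(x_t) - \tfrac{1}{\eta}h(x_t)\big)$, which is exactly problem \eqref{eq:lemma-dual-2}. Since strong duality guarantees the optimal values agree and the dual minimizer $\lambda^\star$ is the multiplier certifying the primal optimum, this establishes that the two problems are dual to one another.

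I expect the main obstacle to be purely the bookkeeping in the substitution step: after inserting $x-x_t = -\eta T(x_t)(\nabla f(x_t)+J_h(x_t)^\top\lambda)$, the cross term $\nabla f(x_t)^\top T(x_t)J_h(x_t)^\top\lambda$ appears with coefficient $-\eta$ from the linear part, $+\eta$ from expanding $\tfrac{1}{2\eta}\|\cdot\|_{T(x_t)^{-1}}^2$, and $-\eta$ from $\lambda^\top J_h(x_t)(x-x_t)$, and one must verify these collapse to a single $-\eta$; similarly the two $\nabla f(x_t)^\top T(x_t)\nabla f(x_t)$ contributions must combine to $-\tfrac{\eta}{2}$. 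The only subtlety is tracking the factor $\eta$ carefully (note the harmless $\tfrac12$ versus $\tfrac{1}{2\eta}$ mismatch in the displayed intermediate line of the surrounding proof); once the $\eta$'s are handled, they cancel cleanly and no constraint qualification beyond the relaxed Slater condition already in force is needed.
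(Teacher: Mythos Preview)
Your proposal is correct and follows essentially the same route as the paper: form the Lagrangian, invoke strong duality via the relaxed Slater condition, minimize over $x$ in closed form, substitute back, and simplify to obtain \eqref{eq:lemma-dual-2}. Your bookkeeping is in fact cleaner than the paper's displayed computation (which contains the $\tfrac12$ vs.\ $\tfrac{1}{2\eta}$ slip you noticed, as well as a sign typo in the last substituted term), and you correctly isolate and discard the $\lambda$-independent constant that the paper drops silently.
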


\begin{lemma}\label{lemma:auxiliary-assumptions}
    Assumption \ref{assump:eq-JJ} along with Assumption \ref{assump:lipschitz} is a sufficient condition of Assumption \ref{assump:ineq-JJ}.
    \begin{proof}
        From Lemma \ref{lemma:KKT} we have that
        \begin{align*}
            &\lambda^\top \JTJ\lambda + \lambda^\top(\JT\nabla f(x) - Ky) = 0\\
            \Longrightarrow~~~ & \lmin(\JJ) \lmin\|\lambda^\top \|\le \lambda^\top \JTJ\lambda \\&= - \lambda^\top(\JT\nabla f(x) - Ky) \le \|\lambda\|\|(\JT\nabla f(x) - Ky)\|\\
            \Longrightarrow~~~ & \|\lambda\|\le \frac{\|\JT\nabla f(x) - Ky)\|}{\lmin(\JJ)\lmin}\le \frac{\lmax\|(J_h(x)\nabla f(x)\| + \|\lmax(K) h(x(0))\|}{\lmin(\JJ)\lmin}
        \end{align*}
        From Assumption \ref{assump:eq-JJ} and Assumption \ref{assump:lipschitz} we have that
        \begin{align*}
            \frac{1}{\lmin(\JJ)} \le D^2,\|(J_h(x)\nabla f(x)\|\le M^2
        \end{align*}
        And thus
        \begin{align*}
            \|\lambda\| \le \frac{\lmax M^2 + \|\lmax(K)h(x(0))\|}{\lmin}D^2.
        \end{align*}
        Hence, Assumption \ref{assump:ineq-JJ} is satisfied by setting $L = \frac{\lmax M^2 + \|\lmax(K)h(x(0))\|}{\lmin}D^2$.
    \end{proof}
\end{lemma}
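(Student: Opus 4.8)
The plan is to characterize the minimizer $\lambda$ of the quadratic program \eqref{eq:lambda-ineq} through its KKT system and then read off a norm bound by the standard device of left-multiplying the stationarity condition by $\lambda^\top$. Concretely, Lemma \ref{lemma:KKT} tells us that $\lambda$ (together with a slack vector $s$) solves
\[
\JTJ\,\lambda + \JT\nabla f(x) = Kh(x) + s,\qquad s\ge 0,\quad \lambda\ge 0,\quad s^\top\lambda = 0 .
\]
Taking the inner product with $\lambda$ and using complementary slackness $\lambda^\top s = 0$ gives the scalar identity $\lambda^\top\JTJ\,\lambda = \lambda^\top\bigl(Kh(x) - \JT\nabla f(x)\bigr)$, which is the quantity I would sandwich from both sides.

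First I would lower-bound the left side. Since $\lmin I \preceq T(x)$, we have $\lambda^\top\JTJ\,\lambda \ge \lmin\,\lambda^\top\JJ\,\lambda \ge \lmin\,\lambda_{\min}(\JJ)\,\|\lambda\|^2$, and Assumption \ref{assump:eq-JJ} gives $\lambda_{\min}(\JJ) = 1/\lambda_{\max}\bigl((\JJ)^{-1}\bigr) > 1/D^2$, so the left side is at least $(\lmin/D^2)\,\|\lambda\|^2$. For the right side, Cauchy--Schwarz and submultiplicativity give $\lambda^\top\bigl(Kh(x) - \JT\nabla f(x)\bigr) \le \|\lambda\|\bigl(\|K\|\,\|h(x)\| + \lmax\,\|J_h(x)\|\,\|\nabla f(x)\|\bigr)$, and Assumption \ref{assump:lipschitz} bounds $\|J_h(x)\|\,\|\nabla f(x)\| < M^2$. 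Cancelling one factor of $\|\lambda\|$ yields $\|\lambda\| \le \tfrac{D^2}{\lmin}\bigl(\|K\|\,\|h(x)\| + \lmax M^2\bigr)$, and since $\|\lambda\|_\infty\le\|\lambda\|$ this is already the form of the bound required by Assumption \ref{assump:ineq-JJ}, provided $\|h(x)\|$ is controlled uniformly over $x\in\cE$.

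The only step that needs care is this last uniform control of $h$ on $\cE$: the definition of $\cE$ only pins down the components $h_i(x)$ for $i\in\cI(x(0))$, where $0<h_i(x)\le h_i(x(0))$. For $i\notin\cI(x(0))$ I would appeal to Statement 1 of Theorem \ref{thm:convergence-ineq-mD} (equivalently the comparison inequality $\dot h_i\le -k_i h_i$, giving $h_i(x(t))\le e^{-k_i t}h_i(x(0))$), which shows the violated-index set never grows, so $[h_i(x)]_+ = 0$ there; alternatively one can avoid bounding $\|h(x)\|$ directly and instead bound only the scalar $\lambda^\top Kh(x) = \sum_i k_i\lambda_i h_i(x)$, dropping the nonpositive terms with $h_i(x)\le 0$ to get $\lambda^\top Kh(x)\le \|\lambda\|_\infty\sum_{i\in\cI(x(0))}k_i h_i(x(0))$. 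Either way $\|h(x)\|$ (or its positive part) is replaced by the constant built from $h(x(0))$, and we conclude the lemma with $L := \tfrac{D^2}{\lmin}\bigl(\lmax(K)\,\|h(x(0))\| + \lmax M^2\bigr)$. I expect this bookkeeping about the active/inactive index sets — rather than the elementary norm estimates — to be the main (and fairly minor) obstacle.
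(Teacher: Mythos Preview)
Your approach is essentially identical to the paper's: take the inner product of the KKT stationarity condition (Lemma \ref{lemma:KKT}) with $\lambda$, use complementary slackness to kill the $s$-term, lower-bound $\lambda^\top\JTJ\lambda$ via $\lmin\cdot\lambda_{\min}(\JJ)\|\lambda\|^2$, upper-bound the other side by Cauchy--Schwarz, and plug in Assumptions \ref{assump:lipschitz} and \ref{assump:eq-JJ} to reach the same constant $L=\tfrac{D^2}{\lmin}\bigl(\lmax M^2+\lmax(K)\|h(x(0))\|\bigr)$. The bookkeeping worry you raise about the components $h_i(x)$ with $i\notin\cI(x(0))$ is real, but the paper's proof simply replaces $\|Kh(x)\|$ by $\lmax(K)\|h(x(0))\|$ without further comment---so your concern is legitimate but not something the paper itself addresses, and your core argument matches theirs step for step.
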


\end{document}